\theoremstyle{plain}
\newtheorem{lemma}[subsection]{Lemma}
\newtheorem{theorem}[subsection]{Theorem}
\newtheorem{proposition}[subsection]{Proposition}
\newtheorem{corollary}[subsection]{Corollary}
\newtheorem*{theorem*}{Theorem}
\newtheorem{mainTheorem}{Theorem}
\crefname{mainTheorem}{Theorem}{Theorems}
\theoremstyle{definition}
\theoremstyle{remark}
\newtheorem*{remark*}{Remark}
\numberwithin{equation}{subsection} 
\numberwithin{figure}{section} 
\numberwithin{table}{section} 
\numberwithin{subsection}{section} 
\let\c@figure\c@subsection
\let\c@table\c@subsection
\newcommand\iso{\ensuremath{\xrightarrow{{}_\sim}}}
\newcommand\set[2]{\ensuremath{\left\{#1\mid#2\right\}}}
\DeclareMathOperator{\Hom}{Hom}
\DeclareMathOperator{\Tab}{Tab}
\DeclareMathOperator{\gl}{\ensuremath{\mathfrak{gl}}}
\DeclareMathOperator{\rk}{rk\,}
\newcommand\T{\ensuremath{\mathsf T}}
\def\P{\ensuremath{\mathsf P}}
\newcommand\Q{\ensuremath{\mathsf Q}}
\newcommand\Fl{\ensuremath{\mathcal Fl}}
\newcommand\Ni{\ensuremath{\mathcal N}}
\newcommand\fibre[3]{\ensuremath{S_{\underline{#1}}^{\mathsf{#2}}(#3)}}
\newcommand\fl[2]{\ensuremath{\mathcal Fl(\underline #1, #2)}}
\newcommand\minus{\scalebox{0.75}{\ytableaushort -}}
\newcommand\plus{\scalebox{0.75}{\ytableaushort +}}
\newcommand\pr{\ensuremath{\operatorname{pr}}}
\newcommand\into\hookrightarrow
\newcommand\ev[1]{\ensuremath{\operatorname{ev}(#1)}}
\newcommand\Par{\ensuremath{\mathcal Par}}
\newcommand\sh[1]{\operatorname{sh}(#1)}
\newcommand\rect[2]{\ensuremath{\operatorname{Rect}(\mathsf #1, #2)}}
\title{A Robinson-Schensted Correspondence for Partial Permutations}
\author{Rahul Singh}
\address{Department of Mathematics, Virginia Tech, 460 McBryde Hall, 225 Stanger St., Blacksburg VA 24061}
\email{rahul.sharpeye@gmail.com}
\begin{document}

\maketitle 
\begin{abstract}
We study the Steinberg variety associated to matrix Schubert varieties,
and develop a Robinson-Schensted type correspondence, 
$\tau\leftrightarrow(\Lambda,\Q,\P)$.
Here $\tau$ is a partial permutation of size $p\times q$,
$\Lambda$ an admissible signed Young diagram of size $p+q$,
and \P\ (resp. \Q) a standard Young tableau of size $p$ (resp. $q$) whose shape is determined by $\Lambda$.
By embedding the matrix Schubert variety into a Schubert variety,
we find a close relationship between the combinatorics of the classical Robinson-Schensted-Knuth correspondence and our bijection.
We also show that an involution $(\Lambda,\Q,\P)\mapsto(\Lambda^\vee,\P,\Q)$ corresponds to projective duality on matrix Schubert varieties.
\end{abstract}

\section{Introduction}

The classical Robinson-Schensted correspondence,
see \cite{MR1507943,MR1464693},
associates to each permutation on $n$ letters,
a triple $(\lambda,\Q,\P)$, with $\lambda$ a partition of $n$,
and \Q\ and \P\ Young tableaux of shape $\lambda$.
In \cite{MR929778},
Steinberg presented a geometric interpretation of the Robinson-Schensted correspondence,
by showing that both sides of the correspondence count the irreducible components of the Steinberg variety.
This work has seen many generalizations,
see \cite{MR2128023,MR2966826,MR2979579,nishiyama} among others.
We continue in this tradition,
developing a bijection for partial permutations
by studying the Steinberg variety associated to matrix Schubert varieties.

Let $\Fl(V)$ be the flag variety of a vector space $V$.
Given a nilpotent map $x\in\gl(V)$, the variety
\begin{align*}
\set{(x,F_\bullet)\in\gl(V)\times\Fl(V)}{xF_i\subset F_{i-1}\,\forall\,i}
\end{align*}
is called the \emph{Springer fibre} over $x$.
Following \cite{MR672610}, 
the irreducible components of the Springer fibre
are indexed by the standard Young tableaux of shape $J(x)$,
the Jordan type of $x$.

Let $V_p$ (resp. $V_q$) be a $p$ (resp. $q$)-dimensional vector space, and
\begin{align*}
K=GL(V_q)\times GL(V_p), && C=\Fl(V_q)\times\Fl(V_p)\times\Hom(V_q,V_p).
\end{align*}
The $K$-orbits in $C$ are called \emph{matrix Schubert varieties};
they are indexed by partial permutations of size $p\times q$,
see \cite{MR1154177}.
In a very general setting, see \cref{sec:conormal},
the \emph{irreducible components of the corresponding Steinberg variety} $Z$
are known to be \emph{in bijection with the $K$-orbits},
i.e., the matrix Schubert varieties.

Let $\mathcal O$ be the \emph{nilpotent cone of the} $\widehat A_2$ \emph{quiver}.
The $K$-orbits of $\mathcal O$ are indexed by signed Young diagrams,
cf. \cite{MR666395,MR2941520}, see also \cref{SYD}.
We construct a $K$-equivariant proper map, $\pr:Z\to\mathcal O$,
closely related to the moment map for the $K$-action on $C$.
The fibre at each point of this map is a \emph{product of Springer fibres}.
In particular, the irreducible components of this fibre are indexed by certain pairs of standard Young tableaux.

This suggests an alternate characterization of the irreducible components of $Z$.
For every $Z'\in\operatorname{Irr}(Z)$,
there is a signed Young diagram $\Lambda$ such that
$\pr\left(Z'\right)=\overline{\mathcal O\left(\Lambda\right)}$;
let $(\Q,\P)$ be the pair of standard Young tableaux indexing the generic fibre of $\pr:Z'\to\overline{\mathcal O(\Lambda)}$.
The triple $(\Lambda,\Q,\P)$ identifies the irreducible component $Z'$.

Our first result, \Cref{main:bijection}, states that the above procedure does yield a bijection, with one caveat:
only a subset of the signed Young diagrams appear as $\Lambda$;
we call these the \emph{admissible signed Young diagrams}.
We give geometric and combinatorial characterizations of admissible singed Young diagrams in \cref{defn:admissible,main:admissible}.

\begin{mainTheorem}
Let $\tau$ be a partial permutation of size $p\times q$,
and let $(F_\bullet,G_\bullet,x,y)$ be a generic point in the matrix Schubert variety $C(\tau)$.
Let $\pr(C(\tau))=\mathcal O(\Lambda)$, and set $\Q=\Tab(yx,F_\bullet)$, $\P=\Tab(xy,G_\bullet)$.
We have a bijection,
\begin{equation*}
\mathcal{PP}(p,q)=\bigsqcup\limits_{\Lambda\in ASYD(q,p)}SYT(\Lambda^+)\times SYT(\Lambda^-),
\end{equation*}
given by $\tau\mapsto (\Lambda,\Q,\P)$.
\end{mainTheorem}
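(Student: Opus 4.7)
The plan is to exhibit both sides of the claimed bijection as indexing sets for $\operatorname{Irr}(Z)$, the irreducible components of the Steinberg variety $Z$. The identification $\mathcal{PP}(p,q) \cong \operatorname{Irr}(Z)$ is already built into the setup: the partial permutation $\tau$ corresponds to the conormal variety $Z_\tau := \overline{T^*_{C(\tau)} C}$, and a generic point of $Z_\tau$ projects to a generic point $(F_\bullet, G_\bullet, x, y)$ of the matrix Schubert variety $C(\tau)$. What remains is to match $\operatorname{Irr}(Z)$ with the right-hand disjoint union by analyzing the proper $K$-equivariant map $\pr: Z \to \mathcal O$.

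For a fixed $Z_\tau$, the image $\pr(Z_\tau)$ is closed (by properness), $K$-invariant, and irreducible, hence equals the closure of a single $K$-orbit $\mathcal O(\Lambda)$ in $\mathcal O$; the collection of $\Lambda$ arising this way is what I take to define $ASYD(q,p)$. To extract the tableau data, I restrict $\pr$ to $Z_\tau$ and examine its generic fibre over $\mathcal O(\Lambda)$. Since the total fibre of $\pr$ over $(x,y) \in \mathcal O$ is the product of the Springer fibres over $yx \in \gl(V_q)$ and $xy \in \gl(V_p)$, and since by Spaltenstein's theorem the irreducible components of these Springer fibres are indexed by standard Young tableaux of shapes $J(yx)$ and $J(xy)$, the generic fibre of $\pr|_{Z_\tau}$ singles out one such component on each side. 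The structure of nilpotent orbits for the $\widehat A_2$ quiver then forces $J(yx) = \Lambda^+$ and $J(xy) = \Lambda^-$, yielding the assignment $\tau \mapsto (\Lambda, \Tab(yx, F_\bullet), \Tab(xy, G_\bullet))$; the fact that this is well-defined on $Z_\tau$ follows from irreducibility of $Z_\tau$, which forces the invariants $\Tab(yx, F_\bullet)$ and $\Tab(xy, G_\bullet)$ to be constant on a Zariski open subset.

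For the inverse direction, every irreducible component of the generic fibre of $\pr$ over $\mathcal O(\Lambda)$, equivalently every pair of components of the two Springer fibres, admits a closure in $\pr^{-1}(\overline{\mathcal O(\Lambda)}) \subset Z$; by the $\operatorname{Irr}(Z) \leftrightarrow \mathcal{PP}(p,q)$ correspondence this closure is of the form $Z_\tau$ for a unique $\tau$. This argument supplies both injectivity and surjectivity, once we know the generic fibre of $\pr|_{Z_\tau}$ is indeed irreducible. This irreducibility, together with the matching of shapes $J(yx) = \Lambda^+$, $J(xy) = \Lambda^-$, is the main obstacle: it requires a dimension comparison between $\dim Z_\tau$, $\dim \overline{\mathcal O(\Lambda)}$, and the dimensions of the two Springer fibres, together with an explicit description of the admissibility condition singling out those signed Young diagrams $\Lambda$ that actually arise as $\pr(C(\tau))$. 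I expect the embedding of the matrix Schubert variety into a classical Schubert variety, alluded to in the abstract, to supply the needed control on both the generic fibre and the admissibility condition.
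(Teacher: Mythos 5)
Your skeleton is the same as the paper's: identify $\operatorname{Irr}(Z_C)$ with $\mathcal{PP}(p,q)$ via conormal varieties of the $K$-orbits, push each component through the proper $K$-equivariant map $\pr$ onto a single orbit closure $\overline{\mathcal O(\Lambda)}$, and read off the tableau pair from the product-of-Springer-fibres structure of the fibres of $\pr$. That architecture is right, and your observation that generic constancy of $\Tab(yx,F_\bullet)$ and $\Tab(xy,G_\bullet)$ on the irreducible conormal variety makes the assignment well defined is also fine.

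The gap is exactly the step you flag as ``the main obstacle'' and then defer: the dimension comparison. Without it you have neither surjectivity nor the correct indexing set. Two things must actually be proved. First, that the closure of an irreducible component of $\pr^{-1}(\mathcal O(\Lambda))$ --- a pure-dimensional fibre bundle over $\mathcal O(\Lambda)$ whose fibre is a product of two Springer fibres --- is an irreducible component of $Z_C$ precisely when $\dim\pr^{-1}(\mathcal O(\Lambda))=\dim Z_C=\dim C$; your phrase ``by the correspondence this closure is of the form $Z_\tau$'' silently assumes this, and for non-admissible $\Lambda$ the closure is a proper subvariety of some component, so the inverse direction fails without the count. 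Second, that this dimension equality is equivalent to the paper's combinatorial definition of admissibility, $\sh\Lambda=\Lambda^++\Lambda^-$; simply \emph{defining} $ASYD(q,p)$ to be the image of $\pr$ makes the statement vacuous on that point, since $ASYD$ has a fixed combinatorial meaning elsewhere. The paper closes both gaps not via the embedding $C\into Y$ (that enters only in Theorems B and C) but by dimension bookkeeping: $\dim\mathcal O(\Lambda)=\nicefrac12\dim\Ni(\sh\Lambda)$ for the quiver orbits, Spaltenstein's formula $2\dim S_{\underline a}^{\mathsf T}(x)=2\dim\Fl(\underline a,V)-\dim\Ni(J(x))$ for Springer fibre components, and Macdonald's formula for $\dim\Ni(\lambda)$, which together give $\dim\pr^{-1}(\mathcal O(\Lambda))=\dim C$ iff $2\dim\mathcal O(\Lambda)=2pq+\dim\Ni(\Lambda^+)+\dim\Ni(\Lambda^-)$ iff $\sh\Lambda=\Lambda^++\Lambda^-$. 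You should not expect the Schubert embedding to deliver this part of the argument.
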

The flag $F_\bullet$ (resp. $G_\bullet$) lives in the Springer fibre above the nilpotent matrix $yx$ (resp. $xy$),
and \Q\ (resp. \P) is the standard Young tableau relating the $F_\bullet$ (resp. $G_\bullet$) with $yx$ (resp $xy$), see \cref{springerFibre}.

To unravel the combinatorics of the bijection in \cref{main:bijection},
we \emph{embed the matrix Schubert variety $C(\tau)$ into a Schubert variety $Y(\widehat\tau)$},
see \cref{sub:flags,sub:schubert},
via (a simple variation of) the map,
\begin{equation*}
x\mapsto\begin{pmatrix}I_q&0\\x&I_p\end{pmatrix},
\end{equation*}
see \cref{mapEmbed}.
The key idea is that the conormal variety of $C(\tau)$ is closely related to the conormal variety of $Y(\widehat\tau)$,
see \cref{relateRSK}.
This allows us to leverage the geometric interpretation of the RSK correspondence (cf. \cite{MR929778,MR2979579});
combining this with the \emph{geometric interpretations of various tableau algorithms} developed by van Leeuwen, cf. \cite{MR1739585},
we obtain, in \cref{main:PPtoTriple,main:tripleToPP},
combinatorial descriptions of the bijection in \cref{main:bijection}.

\begin{mainTheorem}
\label{main:PPtoTriple}
Consider $\tau\in\mathcal{PP}(p,q)$.
Let $(\widehat\Q,\widehat\P)$ be the tableaux pair corresponding to $\widehat\tau$ via the Robinson-Schensted-Knuth correspondence,
and set
\begin{align*}
\lambda=\sh{\widehat\Q},&& \Q=\widehat\Q(q),&& \P=\operatorname{Rect}(\widehat\P,q).
\end{align*}
Let $\Lambda$ be the signed Young diagram given by the following rules:
\begin{enumerate}
\item
$\sh\Lambda=\sh\Q+\sh\P$.
\item
The $i^{th}$-row of $\Lambda$ starts with \plus\ if and only if $\sh\Q(i)>\lambda(i)$.
\end{enumerate}
The bijection of \cref{main:bijection} is made explicit by the relation: $\tau\leftrightarrow(\Lambda,\Q,\P)$.
\end{mainTheorem}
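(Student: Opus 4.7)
The plan is to transport the bijection of \cref{main:bijection} through the embedding $C(\tau)\hookrightarrow Y(\widehat\tau)$ of \cref{sub:flags}, and to read each piece of the triple $(\Lambda,\Q,\P)$ off the RSK tableaux $(\widehat\Q,\widehat\P)$ of $\widehat\tau$. Two external inputs drive the argument: the compatibility of conormal varieties in \cref{relateRSK}, which matches the generic cotangent direction to $C(\tau)$ with Steinberg's geometric RSK data on $Y(\widehat\tau)$; and van Leeuwen's dictionary between jeu de taquin and Springer fibre geometry, which converts tableau operations performed on $(\widehat\Q,\widehat\P)$ into geometric operations on the embedded flag.

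First I would identify $\Q$ with $\widehat\Q(q)$. The embedding \cref{mapEmbed} realises $V_q$ as a $\widehat x$-stable subspace of $V_{p+q}$ on which $\widehat x$ restricts to $yx$, and places the flag $F_\bullet$ on $V_q$ as the initial $q$ steps of the full flag on $V_{p+q}$ appearing on the Schubert side. By the definition of the Steinberg tableau recalled in \cref{springerFibre}, the subtableau of $\widehat\Q$ consisting of the boxes that received entries in $\{1,\ldots,q\}$ is precisely $\Tab(yx,F_\bullet)$, which is $\Q$.

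For $\P$, the flag $G_\bullet$ on the quotient $V_p=V_{p+q}/V_q$ is the image of the embedded flag, and the induced nilpotent on the quotient is $xy$. Passing from a full flag on $V_{p+q}$ to its image on this quotient is the geometric operation whose combinatorial shadow is jeu de taquin on the entries $\{q+1,\ldots,p+q\}$ of $\widehat\P$. Invoking van Leeuwen's translation of this operation, the tableau recording $(xy,G_\bullet)$ is exactly $\operatorname{Rect}(\widehat\P,q)$, so $\P$ is identified as stated.

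What remains, and what I expect to be the main obstacle, is the recovery of the signed Young diagram $\Lambda$. The shape equation $\sh\Lambda=\sh\Q+\sh\P$ is forced by \cref{main:bijection} together with $\sh\Lambda^-=\sh\Q$ and $\sh\Lambda^+=\sh\P$. The row-by-row sign pattern records whether each Jordan string of $\widehat x$ originates inside $V_q$ or inside the quotient $V_p$, and this is the invariant that must be matched with the stated criterion $\sh\Q(i)>\lambda(i)$. I would approach this by tracking the row-insertion of $\widehat\tau$, comparing at each stage whether a new bump originates in the first $q$ columns or in the last $p$ columns, and matching the outcome against the $K$-orbit classification of the nilpotent cone $\mathcal O$ recalled in \cref{SYD}. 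This matching, together with the verification that the resulting $\Lambda$ lies in the admissible subset of \cref{defn:admissible}, is the technical heart of the argument; once in hand, it assembles with the two identifications above into the asserted correspondence $\tau\leftrightarrow(\Lambda,\Q,\P)$.
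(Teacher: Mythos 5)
Your identifications of $\Q$ and $\P$ follow the paper's route: a generic point $(E_\bullet,F_\bullet,x,y)\in Z_C(\tau)$ is transported through the embedding so that, by \cref{relateRSK} and the density of $Z_Y^\circ(\widehat\tau)$, the RSK tableaux of $\widehat\tau$ are realised as $\Tab(z,\widetilde x\widetilde E_\bullet)=\widehat\Q$ and $\Tab(z,\widetilde F_\bullet)=\widehat\P$ for the conormal direction $z=\bigl(\begin{smallmatrix}0&y\\0&xy\end{smallmatrix}\bigr)$; restricting to $V_q$ (where $\widetilde x^{-1}z\widetilde x$ acts as $yx$) gives $\Q=\widehat\Q(q)$, and passing to the quotient by $V_q$ gives, via van Leeuwen's rectification result (\cref{rectification}), $\P=\operatorname{Rect}(\widehat\P,q)$. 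Two cautions: the nilpotent being restricted is this generic conormal vector $z$, not ``$\widehat x$'' or $\widehat\tau$ itself; and the passage from genericity in $Z_C(\tau)$ to membership in $Z_Y^\circ(\widehat\tau)$ needs the fibre isomorphism of \cref{relateRSK} together with the $G$-homogeneity of $Y(\widehat\tau)$ — this is a step, not a tautology.

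The genuine gap is the sign rule for $\Lambda$, which you correctly identify as the technical heart and then do not execute. The route you sketch — tracking where bumps originate during the row-insertion of $\widehat\tau$ — is not the paper's and is unlikely to succeed as stated: the signed Young diagram is an invariant of the generic pair $(x,y)$ in the conormal variety, not of the insertion history of $\tau$, and nothing in the paper (or in the standard RSK literature) ties bumping positions to the $K$-orbit classification of $\mathcal O$. The paper instead decomposes $V=V_q\oplus V_p$ into indecomposable graded $R$-modules $V_i$ (suitably ordered), and for each type $U_{2l}^{\pm}$, $U_{2l+1}^{\pm}$ compares $\Lambda^+(i)$, $\Lambda^-(i)$ with the first part $\alpha_i=\lambda(i)$ of $J(z|V_i)$ using \cref{work2,nuLambda}; the sign of the first box of row $i$ is then read off case by case. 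Since the theorem being proved \emph{is} this explicit rule, omitting that computation leaves the main claim unproved. Two smaller corrections: it is $\sh\Q=\Lambda^+$ and $\sh\P=\Lambda^-$, not the reverse; and the identity $\sh\Lambda=\sh\Q+\sh\P$ rests on the admissibility of $\Lambda$ via \cref{LambdaAdmissible,main:admissible}, which you should cite rather than treat as automatic from \cref{main:bijection}.
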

See \cref{tauHat} for the definition of $\widehat\tau$,
\cref{rst} for the definition of $\widehat\Q(q)$,
\cref{rectification} for a definition of $\operatorname{Rect}(\widehat\P,q)$,
and \cref{steinberg} for a discussion of the Robinson-Schensted-Knuth correspondence.

\begin{mainTheorem}
\label{main:tripleToPP}
Consider $\Lambda\in ASYD(q,p)$, $\Q\in SYD(\Lambda^+)$, and $\P\in SYD(\Lambda^-)$,
where $\Lambda^+$ and $\Lambda^-$ be as in \cref{LambdaPM}.
Let $\lambda$ be as in \cref{nuLambda}, let
\begin{align*}
\widehat\Q=(\Q;\lambda),&& \ev{\widehat\P}=\ev{(\ev\P;\lambda)},&&\widehat\tau\overset{RSK}\longleftrightarrow(\widehat\Q,\widehat\P).
\end{align*}
Then $\tau$ is the south-west sub-matrix of $\widehat\tau$ of size $p\times q$.
\end{mainTheorem}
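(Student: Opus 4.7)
The plan is to recognize the construction as the inverse of the one in \cref{main:PPtoTriple}. By \cref{main:bijection}, there is a unique $\tau_0\in\mathcal{PP}(p,q)$ corresponding to the triple $(\Lambda,\Q,\P)$ under the bijection. Let $\widehat{\tau_0}$ denote its embedded permutation from \cref{mapEmbed}, and let $(\widehat\Q_0,\widehat\P_0)$ be the RSK tableau pair of $\widehat{\tau_0}$. By \cref{main:PPtoTriple}, we have $\lambda=\sh{\widehat\Q_0}$, $\Q=\widehat\Q_0(q)$, and $\P=\operatorname{Rect}(\widehat\P_0,q)$. It then suffices to prove $(\widehat\Q,\widehat\P)=(\widehat\Q_0,\widehat\P_0)$: then injectivity of RSK gives $\widehat\tau=\widehat{\tau_0}$, and by the explicit form of \cref{mapEmbed} the south-west $p\times q$ sub-matrix of $\widehat\tau$ is $\tau_0$, which is the partial permutation attached to $(\Lambda,\Q,\P)$ by \cref{main:bijection}.

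The argument therefore reduces to two combinatorial identities: (i) $\widehat\Q_0=(\Q;\lambda)$, and (ii) $\operatorname{ev}(\widehat\P_0)=\operatorname{ev}((\operatorname{ev}\P;\lambda))$. For (i), $\widehat\Q_0$ has shape $\lambda$ and restricts to $\Q$ on entries $\leq q$, so the cells of $\lambda/\sh\Q$ carry the labels $q+1,\dots,q+p$. Because these labels come from the identity block of $\widehat{\tau_0}$ contributed by \cref{mapEmbed}, the order in which the cells of $\lambda/\sh\Q$ are added under RSK is prescribed, and the resulting filling coincides with the canonical extension $(\Q;\lambda)$ defined in \cref{nuLambda}. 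For (ii), I would invoke the commutation of Sch\"utzenberger evacuation with jeu-de-taquin rectification, as developed combinatorially by van Leeuwen \cite{MR1739585}: since $\operatorname{Rect}(\widehat\P_0,q)=\P$, this commutation shows that $\operatorname{ev}(\widehat\P_0)=(\operatorname{ev}\P;\lambda)$, and applying $\operatorname{ev}$ to both sides (using $\operatorname{ev}^2=\operatorname{id}$) yields identity (ii).

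The main obstacle is identity (ii): one must carefully verify that evacuation intertwines with the rectification used to define $\P=\operatorname{Rect}(\widehat\P_0,q)$, since evacuation reverses the order of the entries and thereby interchanges the roles of the restricted sub-tableau and the complementary skew part. Identity (i), by contrast, is forced by the block-triangular structure of $\widehat{\tau_0}$ via a direct trace of the RSK insertions of the identity block; admissibility of $\Lambda$ ensures that the prescribed filling of $\lambda/\sh\Q$ is in fact a valid SYT extension of $\Q$.
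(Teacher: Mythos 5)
Your proposal is correct and follows essentially the same route as the paper: the paper's proof likewise reduces to the two identities $\widehat\Q=(\Q;\lambda)$ and $\ev{\widehat\P}=(\ev\P;\lambda)$, verifying them at a generic point of $Z_C(\tau)$ by computing Jordan types of $z$ on the subflag $E_\bullet$ and on the quotient flags $\nicefrac{\widetilde F_p}{\widetilde F_i}$ --- which is exactly the geometric form of the evacuation--rectification compatibility you cite from van Leeuwen, and your framing via RSK injectivity is only a repackaging of the same content. One cosmetic point: in the paper's row-standard convention $\widehat\Q$ has content $(1,\dots,1,p)$, so the cells of $\nicefrac{\lambda}{\sh{\Q}}$ all carry the single label $q+1$ rather than $q+1,\dots,q+p$, which makes your identity (i) immediate from $\Q=\widehat\Q(q)$ and $\sh{\widehat\Q}=\lambda$, with no analysis of insertion order required.
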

The operator $\mathrm{ev}$ is tableau evacuation, also known as the Sch\"utzenberger involution, see \cref{evacuation}.

Given a signed Young diagram $\Lambda$, let $\Lambda^\vee$ be the diagram obtained by switching the labelling of the boxes.
In \cref{duality}, we show that the involution $(\Lambda,\Q,\P)\mapsto(\Lambda^\vee,\P,\Q)$ corresponds to projective duality for matrix Schubert fibres.

The questions tackled here are also discussed in \cite{nishiyama},
where a Robinson-Schensted correspondence is developed for partial permutations of size $n\times n$.
However, both the bijection and the algorithms presented in \cite[Thm. 7.6]{nishiyama} are different from the ones here.
Another key difference is that we work with Schubert varieties in a double partial flag variety $\Fl(\underline a,V)\times\Fl(\underline b,V)$,
as contrasted with the Grassmannian subvarieties used in \cite{nishiyama}.
This allows us to factor much of the combinatorics through the combinatorics of the classical Robinson-Schensted-Knuth correspondence.

{\em Acknowledgements.}
The author would like to thank K. Nishiyama for some very illuminating discussions.

\setcounter{mainTheorem}{0}
\section{The Steinberg Variety}
\label{sec:conormal}
We work over $\mathbb C$.
In this section, we recall some standard results on the Steinberg variety of a $G$-variety.
For a detailed discussion, the reader may consult \cite{MR1433132}.

\subsection{The Conormal Bundle}
\label{sub:con}
Let $X$ be a smooth variety, $Y$ a smooth (not necessarily closed) subvariety of $X$,
and $TX$ and $TY$ the corresponding tangent bundles.
The conormal bundle of $Y$ in $X$ is a vector bundle,
$
	T^*_XY\to Y
$, 
whose fibre at a point $p\in Y$ is the annihilator of the tangent subspace $T_pY$ in $T^*_pX$, i.e.,
\[
	(T^*_XY)_p=\set{x\in T^*_pX}{x(v)=0,\forall v\in T_pY}.
\]

%
%
\subsection{Group Actions with Finitely Many Orbits}
\label{finiteOrbits}
Let $G$ be a reductive group acting on a (possibly singular) algebraic variety $X$ with finitely many orbits.
We denote by $\nicefrac XG$ the set of $G$-orbits in $X$, and write 
\begin{align*}
X=\bigsqcup\limits_{\lambda\in\nicefrac XG}X(\lambda),
\end{align*}
for the $G$-orbit decomposition of $X$.
The orbit closure inclusion relation induces a partial order on $\nicefrac XG$, namely 
$
\lambda\preceq\nu\iff\overline{X(\lambda)}\subset\overline{X(\nu)}.
$
%

\subsection{The Steinberg Variety}
\label{kashiwara}
The $G$-action on $X$ induces a $G$-action on the cotangent bundle $T^*X$;
The symplectic structure on $T^*X$ admits a $G$-equivariant proper map, $\mu_X:T^*X\to\mathfrak g$, called the \emph{moment map}.
The zero fibre of the moment map, $\mu_X^{-1}(0)$, is called the \emph{Steinberg variety} $Z_X$ of $X$.
Its irreducible components are precisely
the closures of the conormal bundles of the $G$-orbits $X(\lambda)\subset X$.
We write this concisely as
\begin{align*}
\operatorname{Irr}(Z_X)=\set{\overline{T^*_XX(\lambda)}}{\lambda\in\nicefrac XG}.
\end{align*}

\section{The Robinson-Schensted-Knuth Correspondence}
\label{sec:rsk}
Let $V$ be a $N$-dimensional vector space, and let $G=GL(V)$.
In this section, we recall some results on the nilpotent cone, flag varieties, and Springer fibres associated to $G$.
We also present the Robinson-Schensted-Knuth correspondence from the geometric point of view,
and some results relating algorithms on Young tableaux with certain geometric constructions.

\subsection{Partitions}
\label{sec:dominance}
A partition of $N$ is a weakly decreasing sequence
\begin{equation*}
\lambda=(\lambda(1),\cdots,\lambda(k))
\end{equation*}
of positive integers satisfying $\lambda(1)+\cdots+\lambda(k)=N$.
We call 
$|\lambda|=N$ the size of $\lambda$.
We denote by $\mathcal Par(N)$ the set of all partitions of size $N$.
The \emph{Young diagram} of $\lambda$ is a collection of boxes,
arranged in left-justified rows, with the $i^{th}$-row containing precisely $\lambda(i)$ boxes.
\begin{figure}[h]
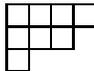

\begin{align*}
\ydiagram{4,3,1} 
\end{align*}
\caption{The Young Diagram of $\lambda=(4,3,1)$.}
\end{figure}


\subsection{The Nilpotent Cone} 
Let $\mathcal N$ be the nilpotent cone in $\gl(V)$, i.e,
\begin{equation*}
\mathcal N=\set{x\in\gl(V)}{x^N=0},
\end{equation*}
and let $\Ni(\lambda)\subset\Ni$ be the subvariety of nilpotent matrices whose Jordan type is $\lambda$.
Following \cite[V(2.9)]{MR553598}, we have, 
\begin{equation}
\label{macMagic}
\dim \mathcal N(\lambda)=|\lambda|\left(|\lambda|+1\right)-2\sum i\lambda(i).
\end{equation}
The $G$-orbit decomposition of \Ni\ is precisely, 
\begin{equation*}
\Ni=\bigsqcup\limits_{\lambda\in\Par(N)}\Ni(\lambda).
\end{equation*}
Further, the closure inclusion order on $\nicefrac\Ni G=\Par(N)$ is precisely the so-called \emph{dominance order} $\preceq$.
Recall that for $\lambda,\nu\in\Par(N)$, we have
\begin{align*}
\lambda\preceq\nu&&\iff&& \lambda(1)+\cdots+\lambda(i)\leq\nu(1)+\cdots+\nu(i),\qquad\forall\,i.
\end{align*}

\subsection{Sum of Partitions}
\label{sumOfPartitions}
Given $\nu\in\Par(q)$ and $\lambda\in\Par(p)$, we define
\begin{align*}
\nu+\lambda & =(\nu(1)+\lambda(1),\nu(2)+\lambda(2),\cdots)\in\Par(q+p).
\end{align*}

Observe that the operation $+$ of \cref{sumOfPartitions} respects the dominance order,
i.e., for any $\lambda,\nu\in\Par(N)$, and any $\mu\in\Par$,
we have $\lambda\preceq\nu$ if and only if $\lambda+\mu\preceq\nu+\mu$.

\subsection{The Young Lattice}
\label{YoungLattice}
Given partitions $\lambda$ and $\nu$, we say $\lambda\leq\nu$ if the Young diagram of $\lambda$ is contained in the Young diagram of $\nu$. 
The set of all partitions of all positive integers forms a lattice, called the \emph{Young lattice}, under the partial order $\leq$.
The Young lattice is graded by size, i.e.,
\begin{equation*}
\lambda<\nu\implies|\lambda|<|\nu|.
\end{equation*}
It is clear that distinct partitions of the same size are mutually incomparable in the Young lattice.

\subsection{Column Strips}
Consider partitions $\lambda$, $\nu$ with $\lambda\leq\nu$.
The set-theoretic difference $\nicefrac\nu\lambda$ is called a \emph{skew-diagram},  see \cite{MR1464693}.
A \emph{column strip} is a skew-diagram in which every row contains at-most $1$ box.

\begin{figure}[ht]
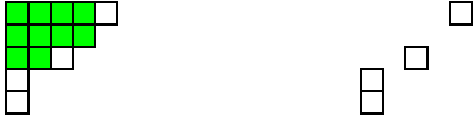

\begin{align*}
\ytableaushort[*(green)]{\ \ \ \ {*(white)\ },\ \ \ \ ,\ \ {*(white)\ },{*(white)\ },{*(white)\ }}&& \ytableaushort{\none\none\none\none\ ,\none\none\none\none,\none\none\ ,\ ,\ } 
\end{align*}
\caption{A pair of partitions $\lambda$ (green boxes only) and $\nu$ (all boxes). 
The skew-diagram $\nicefrac\nu\lambda$ is a Column Strip.}
\end{figure}

\subsection{Row-Standard Tableaux}
\label{rst}
A \emph{composition} of $N$ is a sequence $\underline a=(a_1,\cdots,a_n)$ of positive integers satisfying $a_1+\cdots+a_n=N$. 

A \emph{row-standard tableau} $\mathsf T$ of content $\underline a$ is a sequence of partitions,
\begin{equation*}
\mathsf T=(\mathsf T(1);\cdots;T(n)),
\end{equation*}
satisfying for each $i$, the following conditions:
\begin{enumerate}
\item
$|\mathsf T(i)|=a_1+\cdots+a_i$.
\item 
$\T(i)<\T(i+1)$ in the Young lattice.
\item
The skew-diagram $\nicefrac{\mathsf T(i)}{\mathsf T(i-1)}$ is a column-strip.
\end{enumerate}

Equivalently, $\mathsf T$ is a filling of a Young diagram, which is weakly increasing in each column and strictly increasing in each row, with the integer $i$ occurring $a_i$ times, see \cite{MR1464693}.
We denote by $SYT(\lambda,\underline a)$ the set of row-standard Young tableaux of shape $\lambda$ and content $\underline a$.

\emph{
While column-standard tableaux abound in the literature,
row-standard tableaux are more natural for our purposes,
as will become clear in \cref{springerFibre,steinberg}; see also \cite{MR2979579}.
All the tableaux we encounter in this paper will be row-standard.
Accordingly, we drop the adjective row-standard, and simply call them tableaux.
}

\subsection{Partial Flag Varieties}
\label{sub:flags}
\label{rankMatrix}
Let $\underline a=(a_1,\cdots,a_n)$ be a \emph{composition} of $N$; we denote by $\fl aV $ the variety of partial flags in $V$ of shape $\underline a$, i.e.,
\begin{equation*}
\fl aV =\set{F_\bullet=(F_1\subset \cdots\subset F_n\subset V)}{\dim\nicefrac{F_i}{F_{i-1}}=a_i}.
\end{equation*}
Note that $G=GL(V)$ acts transitively on $\fl aV$.
Following \cite{MR0263830}, we have,
\begin{equation*}
T^*\fl aV =\set{(F_\bullet,x)\in\fl aV \times\gl(V)}{xF_i\subset F_{i-1},\,\forall\,1\leq i\leq n}.
\end{equation*}
The moment map for the $G$-action on \fl aV\ is given by,
\begin{align}
\label{springerMap}
&&\mu_{\fl aV}:T^*\fl aV \to\gl(V),&& (F_\bullet,x)\mapsto x.&&
\end{align}
In particular, the image of the moment map is contained in $\Ni$.

\subsection{The Springer Fibre}
\label{springerFibre}
Given $x\in\mathcal N$,
let $S_{\underline a}(x)=\mu_{\fl aV}^{-1}(x)$, i.e.,
\begin{equation*}
S_{\underline a}(x)=\set{F_\bullet\in\fl aV}{xF_i\subset F_{i-1}\,\forall\,1\leq i\leq n}.
\end{equation*}
We call $S_{\underline a}(x)$ the \emph{Springer fibre} over $x$.

Now consider $(x,F_\bullet)\in T^*\fl aV$.
For each $i$, we have $xF_i\subset F_{i-1}$, and hence a nilpotent map 
$x|F_i\in\gl(F_i)$ obtained by restricting $x$ to $F_i$.
Let $J(x|F_i)$ denote the Jordan type of $x|F_i$, and let
$\Tab(x,F_\bullet)$ denote the tableau. 
\begin{equation*}
\Tab(x,F_\bullet)= (J(x|F_1); J(x|F_2);\cdots; J(x|F_n)).
\end{equation*}
It is clear that the content of $Tab(x,F_\bullet)$ is $\underline a$.
We have a decomposition,
\begin{align*}
&&\fibre a{}x=\bigsqcup\fibre aTx, && \mathsf T\in SYT(J(x),\underline a),
\end{align*}
where the subvariety $S^{\mathsf T}_{\underline a}(x)\subset S_{\underline a}(x)$ is given by, 
\begin{align*}
\fibre aTx=\set{F_\bullet\in S_{\underline a}(x)}{\Tab(x,F_\bullet)=\mathsf T}.
\end{align*}
Following \cite{MR672610}, we have,
\begin{equation*}
\operatorname{Irr}\left(S_{\underline a}(x)\right)=\set{\overline{\fibre aTx}}{\mathsf T\in SYT(J(x),\underline a)}.
\end{equation*}

Recall that a variety is said to be pure-dimensional if all of its components have the same dimension.
Following \cite{MR672610}, the variety $S_{\underline a}(x)$ is pure-dimensional, with the dimension given by,
\begin{equation}
\label{dimSpringerFibre}
2\dim\fibre aTx=2\dim\fl aV-\dim\mathcal N(J(x)).
\end{equation}

\subsection{Schubert Varieties}
\label{sub:schubert}
Let $\underline a=(a_1,\cdots,a_n)$ and $\underline b=(b_1,\cdots,b_m)$ be a pair of compositions of $N$.
The group $G$ acts diagonally on $Y=\fl aV \times\Fl(\underline b,V)$ with finitely many orbits, the so-called \emph{Schubert cells}.

The Schubert cells $Y(\sigma)\subset Y$ are indexed by $m\times n$ matrices $\sigma$ satisfying the following conditions:
\begin{itemize}
\item Each entry of $\sigma$ is a non-negative integer.
\item The sum of the entries in the $i^{th}$-row is $b_i$.
\item The sum of the entries in the $i^{th}$-column is $a_i$.
\end{itemize}
The Schubert cell $Y(\sigma)$ corresponding to $\sigma$ is then precisely,
\begin{equation*}
Y(\sigma)=\set{\left(E_\bullet,F_\bullet\right)\in Y}{\rk(E_i\cap F_j)= \sum\limits_{k=1}^i\sum\limits_{l=1}^j\sigma_{kl}}.
\end{equation*}
The closure of a Schubert cell is called a \emph{Schubert variety}.
We have,
\begin{equation*}
\overline{Y(\sigma)}=\set{\left(E_\bullet,F_\bullet\right)\in Y}{\rk\left(E_i\cap F_j\right)\geq \sum\limits_{k=1}^i\sum\limits_{l=1}^j\sigma_{kl}}.
\end{equation*}

\subsection{The Steinberg Variety of the Double Flag Variety}
\label{steinDouble}
The moment map,
\begin{equation*}
\mu_Y:T^*\fl aV\times T^*\fl bV\to\gl(V),
\end{equation*}
for the $G$ action on $Y=\fl aV\times\fl bV$ is given by
\begin{equation*}
((F_\bullet,x),(E_\bullet,y))\mapsto x+y.
\end{equation*}
Consequently, the corresponding Steinberg variety $Z_{Y}$ is precisely,
\begin{align*}
Z_{Y} & =\set{(F_\bullet,x,E_\bullet,y)\in T^*\fl aV\times T^*\fl bV}{y=-x},\\
                 & =\set{(F_\bullet,E_\bullet,x)\in Y\times\Ni}{xF_i\subset F_{i-1},\,xE_i\subset E_{i-1},\,\forall i}.
\end{align*}
For $\sigma\in\nicefrac YG$, let $Z_{Y}(\sigma)=T^*_{Y}Y(\sigma)$.
Following \cref{kashiwara}, we have,
\begin{equation*}
\operatorname{Irr}(Z_Y)=\set{\overline{Z_Y(\sigma)}}{\sigma\in\nicefrac YG}.
\end{equation*}

\subsection{Geometry of the Robinson-Schensted-Knuth correspondence}
\label{steinberg}
Following \cite{MR272654,MR929778,MR2979579}, we have a bijection,
\begin{align*}
\nicefrac YG & \iso\bigsqcup\limits_{\lambda\vdash N} SYT(\lambda,\underline a)\times SYT(\lambda,\underline b), \\
\sigma       & \mapsto(\Tab(x,E_\bullet),\Tab(x,F_\bullet)),
\end{align*}
where $x\in\mathcal N$ is chosen to be generic for the property $(E_\bullet,F_\bullet,x)\in T^*_YY(\sigma)$.
This bijection is a simple variation of the correspondence discussed in \cite{MR1464693},
see \cite{MR2979579} for details.

Suppose $\sigma\in\nicefrac YG$ corresponds to the tableaux pair $(\P,\Q)$ via the above correspondence.
We will denote
\begin{align}
\label{ZYcirc}
Z^\circ_Y(\sigma)=\set{(E_\bullet,F_\bullet,x)}{\P=\Tab(x,E_\bullet),\Q=\Tab(x,F_\bullet)}.
\end{align}
Following \cite{MR929778}, $Z^\circ_Y(\sigma)$ is dense open subset of $Z_Y(\sigma)$.

\subsection{Rectification}
\label{rectification}
Given a tableau $\mathsf T$, let $F_{\bullet}$ be a generic point in $\fibre aTx$, see \cref{springerFibre}.
Fix $0\leq i\leq n$, and consider the partial flag,
\begin{equation*}
\nicefrac{F_\bullet}{F_i}=0\subset\nicefrac{F_{i+1}}{F_i}\subset\cdots\subset\nicefrac{F_n}{F_i}.
\end{equation*}
Since we have $xF_i\subset F_{i-1}\subset F_i$, we obtain an induced nilpotent map, 
\begin{align*}
&&x:\nicefrac{F_n}{F_i}\to\nicefrac{F_n}{F_i}, && \text{satisfying} && x(\nicefrac{F_j}{F_i})\subset\nicefrac{F_{j-1}}{F_i},\ \forall\, i\leq j\leq n.&&
\end{align*}
This yields the tableau,
\begin{align*}
\rect Ti:=\Tab(x|\nicefrac{F_n}{F_i},\nicefrac{F_\bullet}{F_i}).
\end{align*}
Following \cite{MR1739585}, \rect Ti\ is precisely the \emph{rectification} of the skew-tableau $\nicefrac{\mathsf T}{\mathsf T(i)}$ as described in \cite{MR1464693}.

\subsection{Evacuation}
\label{evacuation}
Let $F_\bullet$ and $x$ be as in \cref{rectification}.
The sequence,
\begin{equation*}
(J(x|\nicefrac{F_n}{F_{n-1}}); J(x|\nicefrac{F_n}{F_{n-2}});\cdots; J(x|\nicefrac{F_n}{F_0})),
\end{equation*}
is a tableau of content $(a_n,a_{n-1},\cdots,a_1)$.
We denote this tableau by \ev\T.
Following \cite{MR1739585}, \ev\T\ is precisely the evacuation (also known as the Sch\"utzenberger involution) of \T, see \cite[A3.8]{MR2133266} for more details, including an algorithm to compute \ev\T.

\section{Signed Young Diagrams}
\label{SYD}
In this section, we study some combinatorial, geometric, and representation theoretic aspects of signed Young diagrams.
We also define and characterize the admissible signed Young diagrams.

\subsection{The Ring \texorpdfstring{$R$}{R}}
\label{nonComRing}
Consider the non-commutative graded ring,
\begin{align*}
&& R=\nicefrac{\mathbb C\langle\alpha,\beta\rangle}{\left\langle\alpha^2,\beta^2\right\rangle}, 
&& \deg(\alpha)=\deg(\beta)=1.
\end{align*}
Let $M$ be a finite dimensional graded $R$-module,
with degrees the principal $\nicefrac{\mathbb Z}2$-space $\left\{\plus,\minus\right\}$,
and satisfying the following extra condition:
$\alpha$ (resp. $\beta$) acts trivially on $M(\minus)$ (resp. $M(\plus)$).
\begin{footnote}
{The modules $M$ discussed here are precisely the nilpotent representations of the $\widehat A_2$ quiver.
For a discussion of the general theory, see \cite{MR3526103}
}
\end{footnote}

Let $R^+$ (resp. $R^-$) denote the graded $R$-module whose underlying set is $R$,
with grading obtained by setting $\deg(1)=\plus$ (resp. $\minus$).
For $k\geq 1$, we have $k$-dimensional indecomposable graded modules,
\begin{align*}
U_k^+=\frac {R^+}{\left\langle \cdots xyx, y\right\rangle}, &&
U_k^-=\frac {R^-}{\left\langle \cdots yxy, x\right\rangle},
\end{align*}
We have a decomposition of graded modules,
\begin{align}
\label{colorDecomposition}
M=\bigoplus M_i,
\end{align}
where each $M_i$ is isomorphic to some $U_k^\pm$.

\begin{lemma}
\label{work2}
Let $M$ be an indecomposable $R$-module.
The element $z=\beta+\alpha\beta\in\gl(M)$ is nilpotent,
and its Jordan type is given by
\begin{align*}
J(z| U_{2n+1}^\pm) & =(n+1,1,\cdots,1),\\
J(z|U_{2n}^-)      & =(n+1,1,\cdots,1),\\
J(z| U_{2n}^+)     & =(n,1,\cdots,1).
\end{align*}
\end{lemma}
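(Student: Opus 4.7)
My plan is to make the basis of $U_k^\pm$ completely explicit and then compute $z$ directly in that basis. Since $U_k^+$ is a cyclic left $R$-module generated by $1$ in degree $\plus$ and since the relation $\beta = 0$ is built into its presentation, it has an alternating-word basis $v_0 = 1,\; v_1 = \alpha v_0,\; v_2 = \beta v_1,\; v_3 = \alpha v_2,\; \ldots,\; v_{k-1}$; the truncation at $v_{k-1}$ is forced by the second relation $\cdots xyx = 0$. By construction $v_j$ lies in degree $\plus$ when $j$ is even and in degree $\minus$ when $j$ is odd, and $U_k^-$ admits the symmetric description with the roles of $\alpha$ and $\beta$ swapped.

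Next I would compute $z = \beta + \alpha\beta$ on each $v_j$. The trivial-action condition gives $\beta \cdot v_{2j} = 0$ in $U_k^+$, hence $z\, v_{2j} = 0$, while for odd index $\beta v_{2j+1} = v_{2j+2}$ and $\alpha\beta v_{2j+1} = v_{2j+3}$, giving $z\, v_{2j+1} = v_{2j+2} + v_{2j+3}$; here we use the convention $v_j = 0$ for $j \geq k$. On $U_k^-$ the even/odd parities are swapped: $z\, v_{2j+1} = 0$ and $z\, v_{2j} = v_{2j+1} + v_{2j+2}$. In particular the image of $z$ always lies in the span of $v_i$'s with strictly larger index than the input, so $z$ is nilpotent.

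The main step is to read off the Jordan decomposition. In each case I start with the first basis vector not in $\ker z$, namely $v_1$ for $U_k^+$ and $v_0$ for $U_k^-$, and compute its $z$-chain. A short induction using the recursion above gives, in $U_k^+$, $z^j(v_1) = v_{2j} + v_{2j+1}$ for all $j$ for which both indices lie in range, and a single surviving summand at the very top of the chain; an analogous statement holds in $U_k^-$. Reading off the last nonzero iterate case by case yields chain length $n+1$ for $U_{2n+1}^\pm$ and $U_{2n}^-$, and $n$ for $U_{2n}^+$; this produces the single long Jordan block.

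Finally, the basis vectors not appearing as summands in this chain (those in the other parity class) together with the chain form a basis of $U_k^\pm$, and each of them lies in $\ker z$, so contributes a $1$-block. A straightforward count shows the number of $1$-blocks matches the tail of the claimed partition in all four cases. The main obstacle is precisely the bookkeeping at the top of the chain: whether the last nonzero iterate is a sum $v_i + v_{i+1}$ or a single $v_i$ depends on the joint parity of $k$ and of the starting index, and this is exactly what makes the chain shorter by one in the $U_{2n}^+$ case than in the other three.
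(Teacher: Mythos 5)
Your proof is correct and takes essentially the same route as the paper, which likewise splits each $U_k^\pm$ into the cyclic $\mathbb C[z]$-submodule generated by $\alpha\cdot 1$ (for $U^+$) or by $1$ (for $U^-$) plus a complementary subspace killed by $z$, and then counts dimensions. One wording slip: the last nonzero iterate of the chain is not always ``a single surviving summand'' (in $U_{2n+1}^-$ it is $v_{2n-1}+v_{2n}$), though your closing sentence shows you track this case distinction correctly and the resulting chain lengths and $1$-block counts all check out.
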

\begin{proof}
A simple calculation yields $z^k=\beta(\alpha\beta)^{k-1}+(\alpha\beta)^k$.
We decompose $M$ as a sum of an indecomposable $\mathbb C[z]$ sub-module and a subspace on which $z$ acts as $0$.
\begin{align*}
U_{2n}^+   & =\left\langle 1,\alpha,\alpha\beta ,\cdots,\alpha(\alpha\beta )^{n-1}\right\rangle\\
           & =\left\langle z^i\alpha\mid 0\leq i\leq n-1\right\rangle\oplus\left\langle (\alpha\beta )^i\mid 0\leq i\leq n-1\right\rangle.\\
U_{2n}^-   & =\left\langle 1,\beta,\beta\alpha ,\cdots,\beta(\beta\alpha )^{n-1}\right\rangle\\
           & =\left\langle z^i\mid 0\leq i\leq n-1\right\rangle\oplus\left\langle (\alpha\beta )^i\mid 1\leq i\leq n-1\right\rangle.\\
U_{2n+1}^+ & =\left\langle 1,\alpha,\alpha\beta ,\cdots,(\alpha\beta )^n\right\rangle\\
           & =\left\langle z^i\alpha\mid 0\leq i\leq n\right\rangle\oplus\left\langle (\alpha\beta )^i\mid 0\leq i\leq n-1\right\rangle.\\
U_{2n+1}^- & =\left\langle 1,\beta,\beta\alpha ,\cdots,(\beta\alpha )^n\right\rangle\\
           & =\left\langle z^i\mid 0\leq i\leq n\right\rangle\oplus\left\langle (\alpha\beta )^i\mid 1\leq i\leq n-1\right\rangle.
\end{align*}
Counting the dimensions of the summands, we obtain the desired result.
\end{proof}

\subsection{Signed Young Diagrams}
\label{sec:syd}
A \emph{$\pm$-filling} of a Young diagram is an assignment of \plus\ or \minus\ to each box of the Young diagram such that adjacent boxes in the same row have opposite signs.
The \emph{signature} of a $\pm$-filling is the integer pair $(q,p)$, where $q$ is the number of \plus\ boxes,
and $p$ is the number of \minus\ boxes.

We define an equivalence relation $\sim$ on $\pm$-fillings by setting two $\pm$-fillings to be equivalent if and only if one can be obtained from the other by some permutation of equi-sized rows.
The equivalence class of a $\pm$-filling under $\sim$ is called a \emph{signed Young diagram}.
\begin{figure}[h]
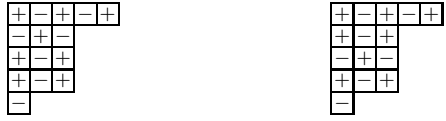

\centering
\ytableaushort{+-+-+,-+-,+-+,+-+,-}
\qquad\qquad\qquad\qquad
\ytableaushort{+-+-+,+-+,-+-,+-+,-}
\caption{Two equivalent $\pm$-fillings with signature $(8,7)$.}
\end{figure}

We denote by $SYD(q,p)$ the set of signed Young diagrams of signature $(q,p)$.
Given a signed Young diagram $\Lambda$ of signature $(q,p)$, we will denote by $\operatorname{sh}(\Lambda)$ the \emph{shape} of $\Lambda$, i.e., the Young diagram in $\Par(q+p)$ underlying $\Lambda$.

Consider an $R$-module $M$ as in \cref{nonComRing}.
We associate a signed Young diagram $\Lambda$ to $M$ as follows:
\begin{itemize}[leftmargin=0.8cm]
\item
If $M\cong U_k^+$ (resp. $M\cong U_k^-$),
we set $\Lambda$ to be a one-row signed Young diagram with $k$ boxes,
and set the rightmost box of the row to be \plus\ (resp. \minus).
\item
For general $M$, we construct $\Lambda$ as a union of the one-row diagrams corresponding to its indecomposable components,
see \cref{colorDecomposition}.
\end{itemize}
It is clear that signed Young diagrams classify the isomorphism classes of the graded $R$-modules described in \cref{nonComRing}.

Let $M(\plus)$ (resp. $M(\minus)$) be the $\plus$ (resp. $\minus$) graded component of $M$,
and set $q=\dim M(\plus)$ (resp. $p=\dim M(\minus)$).
The signature of $\Lambda$ is precisely $(q,p)$. 

\subsection{The Partitions \texorpdfstring{$\Lambda^\pm$}{LambdaPM}}
\label{LambdaPM}
The subspace $M(\plus)$ (resp. $M(\minus)$) is stable under the action of $\beta\alpha$ (resp. $\alpha\beta$).
We define the partitions,
\begin{align*}
\Lambda^+=J\left(\beta\alpha|M(\plus)\right)\in\Par(q), && \Lambda^-=J\left(\alpha\beta|M(\minus)\right)\in\Par(p). 
\end{align*}
Given the decomposition, $M=\oplus M_i$, see \cref{colorDecomposition},
we obtain a decomposition of $M(\plus)$ and $M(\minus)$ into indecomposables,
\begin{align*}
M(\plus)=\oplus (M_i\cap M(\plus)), && M(\minus)=\oplus (M_i\cap M(\minus)). 
\end{align*}
It follows that $\Lambda^+$ (resp. $\Lambda^-$) can be obtained from a $\pm$-filling in the equivalence class $\Lambda$
by removing all $\minus$ (resp. $\plus$) boxes,
(and possibly rearranging the rows if necessary).
This yields the Young diagram of $\Lambda^+$ (resp. $\Lambda^-$).

%
\begin{lemma}
\label{nuLambda}
Let $\Lambda\in SYD(q,p)$ be the signed Young diagram associated to an $R$-module $M$ as in \cref{sec:syd}.
Consider $z\in\gl(M)$, given by $z=\beta+\alpha\beta$.
We can obtain the Jordan type of $z$ as follows.

Remove all \plus\ which do not appear in the first column of $\Lambda$,
then left-align (and possibly rearrange) the rows to obtain the partition $\lambda'$.
Let $\lambda\in\Par(q+p)$ be the partition obtained by attaching to $\lambda'$ a new row of size one for every \plus\ removed. 
\end{lemma}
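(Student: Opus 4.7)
The plan is to reduce to indecomposable summands and then invoke \cref{work2} case-by-case. Using the decomposition $M=\bigoplus M_i$ from \cref{colorDecomposition}, the operator $z=\beta+\alpha\beta$ preserves each summand, so $J(z|M)$ is the multiset union of the $J(z|M_i)$ sorted into a single partition. Correspondingly, the signed Young diagram $\Lambda$ is built from one-row signed diagrams, one per indecomposable summand, and the recipe in the statement (delete all \plus-boxes not in the first column, then left-align and rearrange, then append $1$'s for deleted \plus-boxes) acts independently on each row and aggregates at the end. So it suffices to verify the equality on each $U_k^{\pm}$ separately and then assemble the pieces.

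For each indecomposable type I would compare two quantities: the unique large part $m$ appearing in $J(z|U_k^{\pm})=(m,1^{k-m})$ as computed in \cref{work2}, and the length of the single row of $\Lambda$ after deleting the \plus-boxes that are not in the first column. The row corresponding to $U_k^{\pm}$ has alternating signs ending in the sign indicated by the superscript, and its first-column box is \plus\ exactly when the row starts with \plus. The check is then routine counting. For $U_{2n+1}^{+}$ the row has $n+1$ \plus\ boxes, one in the first column, so $n$ are deleted and a row of length $n+1$ survives, matching $(n+1,1^{n})$. For $U_{2n+1}^{-}$ the row starts with \minus; all $n$ \plus\ boxes are deleted and a row of length $n+1$ (entirely \minus) survives, again matching $(n+1,1^{n})$. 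For $U_{2n}^{-}$ the first column is \plus, exactly $n-1$ \plus\ boxes are deleted, and a row of length $n+1$ survives, matching $(n+1,1^{n-1})$. Finally, for $U_{2n}^{+}$ the first column is \minus, all $n$ \plus\ boxes are deleted, and a row of length $n$ survives, matching $(n,1^{n})$.

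Putting the pieces together, the partition $\lambda'$ produced by the recipe is the sorted multiset of the leading parts $m_i$ of the $J(z|M_i)$, and the number of appended $1$'s is $\sum_i (k_i - m_i)$, which is exactly the number of trailing $1$'s in $J(z|M)$. The total size is $\sum_i k_i = q+p$, as required. I do not anticipate any serious obstacle: the entire argument is the four-way case check above, combined with additivity of Jordan types under direct sums. The only mild subtlety is that one must record both the surviving row length and the number of deleted \plus-boxes per indecomposable, so that the two pieces of the recipe ($\lambda'$ and the appended $1$'s) recover the leading parts and the tail of $1$'s of $J(z|M)$ respectively; the cases above confirm both counts simultaneously.
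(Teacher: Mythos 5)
Your proposal is correct and is exactly the argument the paper intends: its proof of this lemma is the one-line remark that it is ``a simple corollary to \cref{work2},'' and your write-up simply carries out that corollary — decompose $M$ into indecomposables via \cref{colorDecomposition}, note that $z=\beta+\alpha\beta$ preserves each summand so Jordan types add, and match the four cases of \cref{work2} against the row-by-row effect of the deletion recipe. All four case counts check out, so no gap.
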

\begin{proof}
A simple corollary to \cref{work2}.
\end{proof}

\subsection{The Variety \texorpdfstring{$\mathcal O$}{O}}
\label{defKH}
Fix positive integers $p$ and $q$.
Let $V_p$ (resp. $V_q$) be a $p$ (resp. $q$)-dimensional vector space, and let $H=\Hom(V_q,V_p)$.
We study the action of the group
$
K=GL(V_q)\times GL(V_p)
$
on $H$
given by the formula
$
(g,h)\bullet x=hxg^{-1}
$.

Let $H^\vee=\Hom(V_p,V_q)$.
The non-degenerate bilinear pairing,
\begin{align*}
\left\langle\ ,\ \right\rangle:H\times H^\vee\to\mathbb C, && \langle x,y\rangle=\operatorname{trace}(xy),
\end{align*}
yields the identification $T^*H=H\times H^\vee$; with this identification, the induced $K$-action on $T^*H$ is given by	
\begin{align*}
(g,h)\bullet (x,y)=(hxg^{-1},gyh^{-1}).
\end{align*}
We are interested in studying the $K$-action on the \emph{nilpotent cone},
\begin{align}
\label{defO}
\mathcal O=\set{(x,y)\in H\times H^\vee}{xy\text{ is nilpotent}}.
\end{align}

To each $(x,y)\in\mathcal O$, we associate a $\Lambda\in SYD(q,p)$.
Consider the graded $R$-module structure on $V=V_q\oplus V_p$, given by
$\alpha\mapsto x$, $\beta\mapsto y$, and
\begin{align*}
\deg(V_q)=\plus,&&\deg(V_p)=\minus.
\end{align*}
We set $\Lambda$ to be the signed Young diagram associated to this module, see \cref{sec:syd}.

Let $\mathcal O(\Lambda)$ be the set of all $(x,y)\in\mathcal O$ for which the associated signed Young diagram is $\Lambda$.
Following \cite{MR666395,MR2941520},
the $\mathcal(O(\Lambda))$ are precisely the $K$-orbits in $\mathcal O$,
i.e., $\nicefrac{\mathcal O}K=SYD(q,p)$.
Further, we have $(x,y)\in\mathcal O(\Lambda)$ if and only if
\begin{gather*}
J\left(yx|V_q\right)  =\Lambda^+,\qquad\qquad J\left(xy|V_p\right) =\Lambda^-,\\
\dim\ker(x(yx)^i|V_q)  =\#\left\{\plus\in\Lambda\text{ contained in the first }2i+1\text{ columns}\right\},\\
\dim\ker(y(xy)^i|V_p)  =\#\left\{\minus\in\Lambda\text{ contained in the first }2i+1\text{ columns}\right\}.
\end{gather*}
We will denote the \emph{closure inclusion order} on $\nicefrac{\mathcal O}K$ by $\preceq$, i.e.,
\begin{equation}
\label{sydPartial}
\Lambda\preceq\Upsilon\iff\overline{\mathcal O(\Lambda)}\subset\overline{\mathcal O(\Upsilon)}.
\end{equation}
Following \cite{MR666395}, the closure $\overline{\mathcal O(\Lambda)}\subset\mathcal O$ is given by the following conditions:
\begin{gather*}
J\left(yx|V_q\right)\preceq\Lambda^+,\qquad\qquad J\left(xy|V_p\right)\preceq\Lambda^-,\\
\dim\ker(x(yx)^i|V_q)\geq\#\left\{\plus\in\Lambda\text{ contained in the first }2i+1\text{ columns}\right\},\\
\dim\ker(y(xy)^i|V_p)\geq\#\left\{\minus\in\Lambda\text{ contained in the first }2i+1\text{ columns}\right\}.
\end{gather*}
In particular, the map $\Lambda\mapsto(\Lambda^+,\Lambda^-)$ is a poset homomorphism.

\begin{lemma}
[\cite{MR1168491}] 
\label{dimMlambda}
We have $\dim\mathcal O(\Lambda)=\nicefrac12\dim\Ni(\sh\Lambda)$.
\end{lemma}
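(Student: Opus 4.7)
The plan is to recast the equality via orbit-stabilizer and reduce to a bilinear identity on the indecomposable summands of the $R$-module $M$ associated to $(x,y) \in \mathcal O(\Lambda)$.

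First I would write, by orbit-stabilizer, $\dim\mathcal O(\Lambda) = \dim K - \dim \operatorname{End}_R^{gr}(M)$, where $\operatorname{End}_R^{gr}(M)$ is the Lie algebra of the stabilizer of $(x,y)$ in $K$, consisting of grading-preserving $R$-endomorphisms of $M$. On the ambient side, set $G = GL(V_q \oplus V_p)$, so that $\dim\mathcal N(\sh{\Lambda}) = \dim G - \dim\operatorname{End}_{\mathbb{C}[\alpha+\beta]}(V)$. A direct check using the basis exhibited in \cref{work2} shows that $\alpha+\beta$ acts on each indecomposable $U_k^\pm$ as a single Jordan block of size $k$, so its Jordan type on $M$ is exactly $\sh{\Lambda}$.

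Since $\dim G - 2\dim K = -(p-q)^2$, the claim is equivalent to
\[
2\dim\operatorname{End}_R^{gr}(M) - \dim\operatorname{End}_{\mathbb{C}[\alpha+\beta]}(V) = (p-q)^2.
\]
Both sides are bilinear in the decomposition $M = \bigoplus M_i$ with $M_i = U_{k_i}^{\epsilon_i}$ of signature $(q_i,p_i)$; using $\dim\Hom_{\mathbb{C}[\alpha+\beta]}(M_i, M_j) = \min(k_i, k_j)$ (the centralizer-of-Jordan-block formula) together with $\sum_i(p_i - q_i) = p - q$, the identity reduces to the per-pair statement
\[
2\dim\Hom_R^{gr}(M_i, M_j) = \min(k_i, k_j) + (p_i-q_i)(p_j-q_j).
\]

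The main obstacle is this per-pair identity. A graded $R$-hom $U_{k_i}^{\epsilon_i} \to U_{k_j}^{\epsilon_j}$ is determined by the image of the generator, which must lie in the degree-$\epsilon_i$ component of $U_{k_j}^{\epsilon_j}$ and be annihilated by the length-$k_i$ relation defining $U_{k_i}^{\epsilon_i}$; this relation imposes a cut-off in how far along the basis chain the image can live, and a direct enumeration in the eight sub-cases determined by the parities of $k_i, k_j$ and the signs $\epsilon_i, \epsilon_j$ verifies the formula. Alternatively, and more conceptually, the result is a special case of the Kostant--Rallis theorem for the symmetric pair $(GL(V_q \oplus V_p), GL(V_q) \times GL(V_p))$: every nilpotent $K$-orbit in $\mathfrak p \cong H \oplus H^\vee$ has dimension equal to half the dimension of its $G$-saturation in $\gl(V_q \oplus V_p)$, which immediately yields the lemma.
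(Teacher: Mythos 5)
The paper itself offers no proof of this lemma --- it is imported wholesale from \cite{MR1168491} --- so the question is whether your argument stands on its own. Your closing ``conceptual'' argument does: under $(x,y)\mapsto\left(\begin{smallmatrix}0&y\\x&0\end{smallmatrix}\right)$, the pair $(\mathcal O,K)$ is exactly the nilpotent cone of the $(-1)$-eigenspace $\mathfrak p\cong H\oplus H^\vee$ of the symmetric pair $\bigl(GL(V_q\oplus V_p),\,GL(V_q)\times GL(V_p)\bigr)$; your observation that $\alpha+\beta$ acts as a single Jordan block of size $k$ on each $U_k^{\pm}$ correctly identifies the Jordan type of this block matrix as $\sh\Lambda$, so its $G$-saturation is $\Ni(\sh\Lambda)$; and Kostant--Rallis gives $\dim K\cdot e=\tfrac12\dim G\cdot e$ for nilpotent $e\in\mathfrak p$. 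That is a complete proof, and it is essentially the content of the cited reference.

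The direct computation, however, breaks at its key step. The per-(ordered-)pair identity
\begin{equation*}
2\dim\Hom_R^{gr}(M_i,M_j)=\min(k_i,k_j)+(p_i-q_i)(p_j-q_j)
\end{equation*}
is false: the left side is not symmetric in $(i,j)$, the right side is, and the right side need not even be even. Concretely, for $M_i=U_1^+$ and $M_j=U_2^+$ the right side equals $\min(1,2)+(-1)\cdot 0=1$ for both ordered pairs, while $\dim\Hom_R^{gr}(U_1^+,U_2^+)=0$ (the generator of $U_1^+$ is killed by $\alpha$, but the only elements of $U_2^+(\plus)$ killed by $\alpha$ are zero) and $\dim\Hom_R^{gr}(U_2^+,U_1^+)=1$. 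So the ``direct enumeration in the eight sub-cases'' cannot verify your formula as stated. What is true --- and is all the reduction needs, since only the double sum over $(i,j)$ enters --- is the symmetrized identity
\begin{equation*}
\dim\Hom_R^{gr}(M_i,M_j)+\dim\Hom_R^{gr}(M_j,M_i)=\min(k_i,k_j)+(p_i-q_i)(p_j-q_j),
\end{equation*}
which you should state and check in place of the per-pair claim. With that repair the elementary argument closes: the orbit--stabilizer step, the identification $\dim\operatorname{Stab}_K(x,y)=\dim\operatorname{End}_R^{gr}(M)$, the centralizer formula $\dim\Hom_{\mathbb C[\alpha+\beta]}(M_i,M_j)=\min(k_i,k_j)$, and the bookkeeping $\dim G-2\dim K=-(p-q)^2$ are all correct.
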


\begin{lemma}
\label{specialDegen}
The map $\operatorname{sh}:SYD(q,p)\to\Par(q+p)$ is a poset homomorphism, i.e, 
$
\Lambda\preceq\Upsilon\implies\operatorname{sh}(\Lambda)\preceq\operatorname{sh}(\Upsilon).
$
\end{lemma}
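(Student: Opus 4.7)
My plan is to deduce $\operatorname{sh}(\Lambda)\preceq\operatorname{sh}(\Upsilon)$ directly from the explicit description of $\overline{\mathcal O(\Upsilon)}$ recalled in \cref{defKH}, proceeding just as the paper does to establish that $\Lambda\mapsto(\Lambda^+,\Lambda^-)$ is a poset homomorphism. The closure conditions are already phrased in terms of column-counts of signed boxes, so with a little bookkeeping they will give dominance of the underlying shape.

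Passing to conjugate partitions, $\operatorname{sh}(\Lambda)\preceq\operatorname{sh}(\Upsilon)$ is equivalent to the column-count inequality $\#\{\text{boxes in first $r$ cols of }\Lambda\}\geq\#\{\text{boxes in first $r$ cols of }\Upsilon\}$ for every $r\geq 1$. The left-hand side splits as $\#\{\plus\text{ in first $r$ cols of }\Lambda\}+\#\{\minus\text{ in first $r$ cols of }\Lambda\}$, and the same for $\Upsilon$, so it suffices to dominate each sign separately and add.

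For odd $r=2i+1$, the inequality for \plus\ (and symmetrically for \minus) is immediate: pick $(x,y)\in\mathcal O(\Lambda)\subset\overline{\mathcal O(\Upsilon)}$ and compare the two descriptions of $\dim\ker(x(yx)^i|V_q)$ given in \cref{defKH}. For even $r=2i$, I would use the already-noted poset homomorphism $\Lambda\mapsto(\Lambda^+,\Lambda^-)$, which in column-count form reads $\#\{\text{boxes in first $i$ cols of }\Lambda^+\}\geq\#\{\text{boxes in first $i$ cols of }\Upsilon^+\}$ together with its \minus-analogue. The bridge connecting the two pictures is the elementary combinatorial identity
\[
\#\{\plus\text{ in first $2i$ cols of }\Lambda\}=\#\{\text{boxes in first $i$ cols of }\Lambda^+\},
\]
which follows row-by-row: a row of $\Lambda$ of length $\ell\geq 2i$ contributes exactly $i$ \plus\ in its first $2i$ entries regardless of starting sign, while a row of length $\ell<2i$ has all of its \plus\ entries inside the first $2i$ columns and these coincide with the first $\lceil\ell/2\rceil$ or $\lfloor\ell/2\rfloor$ boxes of its image in $\Lambda^+$. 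The analogous identity holds for \minus, and adding the two resulting inequalities completes the $r=2i$ case.

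The only step that requires genuine verification is this row-by-row column-collapse identity; everything else is immediate arithmetic from closure conditions already established in \cref{defKH}. Once the identity is in hand, the parity split above assembles into $\operatorname{sh}(\Lambda)\preceq\operatorname{sh}(\Upsilon)$.
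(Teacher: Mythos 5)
Your argument is correct, but it is genuinely different from the one in the paper. You work entirely combinatorially from the Kraft--Procesi-style closure description of $\overline{\mathcal O(\Upsilon)}$ recalled in \cref{defKH}: the odd column-counts $r=2i+1$ come straight from the kernel conditions on $x(yx)^i$ and $y(xy)^i$, the even ones from the already-established homomorphism $\Lambda\mapsto(\Lambda^+,\Lambda^-)$ via the collapse identity $\#\{\plus\text{ in first }2i\text{ cols of }\Lambda\}=\#\{\text{boxes in first }i\text{ cols of }\Lambda^+\}$, and the two parities assemble into dominance of $\sh\Lambda$ after passing to conjugates. Your bridge identity checks out row by row (for a row of length $\ell\geq 2i$ note also that the corresponding row of $\Lambda^+$ has at least $i$ boxes, so both sides contribute exactly $i$), and the direction of all inequalities is right. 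The paper instead argues geometrically: it views $(x,y)$ as the block matrix $\alpha+\beta$ in $\gl(V_q\oplus V_p)$, whose Jordan type is $\sh\Lambda$, so that $\mathcal O(\Lambda)\subset\overline{\mathcal O(\Upsilon)}\subset\overline{\Ni(\sh\Upsilon)}$; saturating by the $GL(V)$-action and using that the latter set is $G$-stable and closed gives $\Ni(\sh\Lambda)\subset\overline{\Ni(\sh\Upsilon)}$, i.e.\ dominance, in three lines. The paper's route is shorter and needs only the standard fact that closure order on nilpotent $GL$-orbits is dominance; yours is longer but makes explicit exactly which of the orbit-closure rank conditions force the shape to degenerate, and stays entirely inside the combinatorics of signed Young diagrams without invoking the ambient $GL(V)$-action.
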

\begin{proof}
Consider $\Lambda\preceq\Upsilon$, so that
\begin{align*}
\mathcal O(\Lambda)\subset\overline{\mathcal O(\Upsilon)}\subset\overline{\mathcal N(\operatorname{sh}(\Upsilon))}.
\end{align*}
Observe that $\overline{\mathcal N(\operatorname{sh}(\Upsilon))}$ is $G$-stable; hence we have,
\begin{align*}
\mathcal N(\operatorname{sh}(\Lambda))=G\cdot\mathcal O(\Lambda)\subset\overline{\mathcal N(\operatorname{sh}(\Upsilon))},
\end{align*}
which is equivalent to $\operatorname{sh}(\Lambda)\preceq\operatorname{sh}(\Upsilon)$.
\end{proof}

\subsection{The Moment Map \texorpdfstring{$\mu_H$}{muH}}
\label{momentH}
The $K$-action on $H$ admits a moment map,
\begin{align*}
\mu_H: H\times H^\vee  & \rightarrow\gl(V_q)\times\gl(V_p),\\
\mu_H(x,y)                             & =(yx,xy).
\end{align*}
The $K$-orbit decomposition of the nilpotent cone $\mathcal M$ of $K$ is given by
\begin{align*}
\mathcal M=\bigsqcup\limits_{\nu\vdash q,\,\lambda\vdash p}\mathcal M(\nu,\lambda), && \mathcal M(\nu,\lambda)=N(\nu)\times\mathcal N(\lambda).
\end{align*}
It is clear from \cref{LambdaPM} that for any $\Lambda\in SYD(q,p)$, we have, 
\begin{align*}
\mu_H(\mathcal O(\Lambda))=\mathcal M(\Lambda^+,\Lambda^-).
\end{align*}
In particular, we have $\mu_H(\mathcal O)\subset\mathcal M$. 


\begin{lemma}
\label{sydCompare}
For any $\Lambda\in SYD(q,p)$, we have,
\begin{equation*}
\operatorname{sh}(\Lambda)\preceq\Lambda^++\Lambda^-.
\end{equation*}
Equality holds if and only if 
for each odd integer $i$, all the rows of size $i$ in $\Lambda$ have the same signature.
\end{lemma}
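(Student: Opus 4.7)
The plan is to reduce both claims to a classical rearrangement inequality applied row by row. For each row $j=1,\dots,r$ of $\Lambda$, let $k_j$, $a_j$, $b_j$ denote the row length, the number of \plus-boxes in row $j$, and the number of \minus-boxes in row $j$, respectively. Because signs strictly alternate along a row, $a_j=b_j=k_j/2$ whenever $k_j$ is even; and whenever $k_j$ is odd, $(a_j,b_j)$ equals either $((k_j+1)/2,(k_j-1)/2)$ or $((k_j-1)/2,(k_j+1)/2)$, according to whether row $j$ has \plus- or \minus-signature. From the definitions in \cref{sec:syd,LambdaPM}, $\sh{\Lambda}$ is the decreasing rearrangement of $\{k_j\}$, $\Lambda^+$ of $\{a_j\}$, and $\Lambda^-$ of $\{b_j\}$.

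For the dominance inequality I would use the variational formula for partial sums of a decreasing rearrangement: for every $i$,
\[
\sum_{j=1}^i\sh{\Lambda}(j)=\max_{|S|=i}\sum_{j\in S}(a_j+b_j)\leq\max_{|S|=i}\sum_{j\in S}a_j+\max_{|S|=i}\sum_{j\in S}b_j=\sum_{j=1}^i(\Lambda^++\Lambda^-)(j),
\]
which is exactly $\sh{\Lambda}\preceq\Lambda^++\Lambda^-$.

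For the equality clause I would argue that equality at every $i$ is equivalent to the existence of a single permutation of the rows that simultaneously arranges $\{k_j\}$, $\{a_j\}$, and $\{b_j\}$ in weakly decreasing order. One direction is immediate; the other requires that equality at each $i$ produce a common top-$i$ subset $S_i$ maximizing both $\sum_{j\in S_i} a_j$ and $\sum_{j\in S_i} b_j$, which one then nests consistently to build such a simultaneous ordering. With this translation in hand, the analysis splits by row size: rows of even size $k$ satisfy $a_j=b_j=k/2$ and cause no obstruction, whereas rows of a fixed odd size $k$ admit the two profiles $((k+1)/2,(k-1)/2)$ and $((k-1)/2,(k+1)/2)$, which order the $a$- and $b$-columns in opposite directions. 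Hence a simultaneous weakly decreasing arrangement among rows of that odd size exists precisely when only one profile occurs, i.e., all such rows share a single signature.

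The main obstacle will be the converse direction of the equality analysis, namely upgrading the purely numerical identity of partial sums into a genuine common ordering of rows while handling ties among the $k_j$. I expect the nesting argument to go through cleanly, since within each fixed row size the only free datum is the signature, and the equality constraint precisely forces this signature data to align on the two sides.
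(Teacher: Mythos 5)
Your argument is correct and follows essentially the same route as the paper: both reduce the statement to the per-row counts of \plus\ and \minus\ boxes, apply the rearrangement inequality for partial sums, and settle the equality case by the two possible profiles of an odd-length row. The paper sidesteps the subset-nesting step you flag as the main obstacle by fixing the row order of the $\pm$-filling (whose lengths are already weakly decreasing) and using that equality of partial sums against the decreasing rearrangement forces the sequences $(a_j)$ and $(b_j)$ themselves to be weakly decreasing.
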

\begin{proof}
Fix some $\pm$-filling corresponding to $\Lambda$, and let $a^+_i$ (resp. $a^-_i$) be the number of $\plus$ boxes (resp. $\minus$) boxes in the $i^{th}$ row of this filling.
We have,
\begin{equation*}
\sh\Lambda=(a^+_1+a^-_1,a^+_2+a^-_2,\cdots)
\end{equation*}
Now, observe that the sequence $(a^\pm_1,a^\pm_2,\cdots)$ is some permutation of the weakly decreasing sequence $(\Lambda^\pm_1,\Lambda^\pm_2,\cdots)$, and hence, we have,
\begin{align}
\label{sydWork0}
&&\Lambda^\pm(1)+\cdots+\Lambda^\pm(i)\geq a^\pm(1)+\cdots+a^\pm(i),&&\forall i.
\end{align}
This yields the claimed inequality,
$\operatorname{sh}(\Lambda)\preceq\Lambda^++\Lambda^-$,
with equality holding if and only if equality holds for all $i$ in \cref{sydWork0}. 
This happens if and only if the sequences $(a^+_1,\ldots)$ and $(a^-_1,\ldots)$ are weakly decreasing,
which happens if and only if any two consecutive rows of the same odd length in the $\pm$-filling have the same signature.
\end{proof}

\subsection{Admissible Signed Young Diagrams}
\label{defn:admissible}
We call ${\Lambda}\in SYD(q,p)$ \emph{admissible} if $\operatorname{sh}(\Lambda)=\Lambda^++\Lambda^-$, and we denote by $ASYD(q,p)$ the set of all admissible signed Young diagrams of signature $(q,p)$.

\begin{theorem}
\label{main:admissible}
The following conditions are equivalent for $\Lambda\in SYD(q,p)$:
\begin{enumerate}
\item $\Lambda$ is admissible.
\item $2\dim\mathcal O(\Lambda)=2|\Lambda^+||\Lambda^-|+\dim\mathcal M(\Lambda^+,\Lambda^-)$.
\item $\Lambda$ is maximal in $\left\{\Upsilon\in SYD(q,p)\mid(\Upsilon^+,\Upsilon^-)=(\Lambda^+,\Lambda^-)\right\}$.
\item $\overline{\mathcal O(\Lambda)}$ is an irreducible component of $\overline{\mu_H^{-1}(\mathcal M(\Lambda^+,\Lambda^-))}$.
\end{enumerate}
\end{theorem}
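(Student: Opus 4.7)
The plan is to prove the cycle $(1){\Leftrightarrow}(2)$, $(3){\Leftrightarrow}(4)$, $(1){\Rightarrow}(3)$, and $(3){\Rightarrow}(1)$, where the last is the hard step. For $(1){\Leftrightarrow}(2)$ I would run a dimension calculation: using \cref{dimMlambda}, condition (2) rearranges to $\dim\mathcal N(\sh\Lambda)=2qp+\dim\mathcal N(\Lambda^+)+\dim\mathcal N(\Lambda^-)$, and three applications of \cref{macMagic} combined with the identity $(q+p)(q+p+1)=q(q+1)+p(p+1)+2qp$ collapse the right-hand side to $\dim\mathcal N(\Lambda^++\Lambda^-)$. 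Since \cref{sydCompare} gives $\sh\Lambda\preceq\Lambda^++\Lambda^-$ and $\dim\mathcal N$ is strictly monotone along the dominance order on partitions of fixed size, equality of these two dimensions is equivalent to equality of the two partitions, i.e., admissibility.

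For $(3){\Leftrightarrow}(4)$, set $P=\{\Upsilon\in SYD(q,p):(\Upsilon^+,\Upsilon^-)=(\Lambda^+,\Lambda^-)\}$. By \cref{momentH} we have $\mu_H(\mathcal O(\Upsilon))=\mathcal M(\Upsilon^+,\Upsilon^-)$, so $\mu_H^{-1}(\mathcal M(\Lambda^+,\Lambda^-))=\bigsqcup_{\Upsilon\in P}\mathcal O(\Upsilon)$, whose closure is the finite union $\bigcup_{\Upsilon\in P}\overline{\mathcal O(\Upsilon)}$ of irreducible closed subvarieties; its irreducible components are precisely the maximal $\overline{\mathcal O(\Upsilon)}$ under containment, equivalently the $\preceq$-maximal $\Upsilon$. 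The implication $(1){\Rightarrow}(3)$ is then immediate: by (2), every $\Upsilon\in P$ satisfies $\dim\mathcal O(\Upsilon)=\tfrac12\dim\mathcal N(\sh\Upsilon)\le\tfrac12\dim\mathcal N(\Lambda^++\Lambda^-)=\dim\mathcal O(\Lambda)$, so any containment $\overline{\mathcal O(\Lambda)}\subseteq\overline{\mathcal O(\Upsilon)}$ is forced by dimension to be an equality, yielding $\Lambda=\Upsilon$.

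The hard direction is $(3){\Rightarrow}(1)$. My approach is to produce, for each non-admissible $\Upsilon\in P$, an admissible $\Lambda_0\in P$ with $\Upsilon\prec\Lambda_0$ strictly, contradicting maximality. One builds $\Lambda_0$ with its $i$-th row containing exactly $\Lambda^+(i)$ pluses and $\Lambda^-(i)$ minuses (so that $\sh\Lambda_0=\Lambda^++\Lambda^-$ automatically), using the freedom of starting sign in even-length rows to match $\Upsilon$. Then $\Upsilon\preceq\Lambda_0$ reduces via the explicit closure conditions in \cref{defKH} to inequalities of the form
\[
\#\{\pm\text{ in first }2i+1\text{ columns of }\Upsilon\}\;\ge\;\#\{\pm\text{ in first }2i+1\text{ columns of }\Lambda_0\},
\]
since the partition conditions $\Upsilon^\pm\preceq\Lambda_0^\pm$ hold trivially as equalities. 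The main obstacle is this column-count comparison: combinatorially, the claim is that among diagrams in $P$ the admissible arrangement minimizes partial column counts. I would prove it by induction on the obstruction identified in the proof of \cref{sydCompare}, namely pairs of adjacent same-odd-length rows of opposite dominant signs; replacing such a pair by one row of length two greater and one of length two smaller (with signs chosen appropriately) preserves $(\Lambda^+,\Lambda^-)$ while strictly decreasing the partial $\pm$-column counts at the offending prefix, so iterating reaches an admissible $\Lambda_0$ strictly above $\Upsilon$.
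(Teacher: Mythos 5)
Your proposal is correct and follows essentially the same route as the paper: the same dimension computation via \cref{dimMlambda} and \cref{macMagic} for $(1)\Leftrightarrow(2)$, the same identification of irreducible components of $\overline{\mu_H^{-1}(\mathcal M(\Lambda^+,\Lambda^-))}$ with $\preceq$-maximal orbits for $(3)\Leftrightarrow(4)$, and the same box-moving construction for $(3)\Rightarrow(1)$ (the paper performs a single move, which already contradicts maximality, rather than iterating to an admissible diagram). One small correction: the local move leaves all partial column counts of one sign unchanged and strictly decreases the other sign's count only at the prefix of length $2l+1$, where $2l+1$ is the length of the offending rows, rather than "strictly decreasing the partial $\pm$-column counts"; since the closure criterion of \cref{defKH} only requires weak inequalities, your argument still goes through.
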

\begin{proof}
$(1)\implies (2)$:
Suppose $(1)$ holds.
We see from \cref{dimMlambda} that 
\begin{equation*}
2\dim\mathcal O(\Lambda)=\dim\mathcal N(\sh\Lambda)=\dim\mathcal N(\Lambda^++\Lambda^-).
\end{equation*}
Using \cref{macMagic}, we compute,
\begin{equation*}
\begin{split}
2\dim\mathcal O(\Lambda)  & =(q+p)(q+p+1)-2\sum i(\Lambda^+(i)+\Lambda^-(i))\\
                          & =p(p+1)+p(p+1)+2qp-2\sum i\Lambda^+(i)-2\sum i\Lambda^-(i)\\
                          & =2pq+\dim\Ni(\Lambda^+)+\dim\Ni(\Lambda^-)\\
                          & =2|\Lambda^+||\Lambda^-|+\dim\mathcal M(\Lambda^+,\Lambda^-).
\end{split}
\end{equation*}

$(2)\implies(3)$:
Suppose $(2)$ holds, and $(3)$ does not hold, i.e., there exists $\Upsilon\succ\Lambda$ satisfying
$(\Upsilon^+,\Upsilon^-)=(\Lambda^+,\Lambda^-)$.
Using \cref{dimMlambda,sydCompare}, we deduce a contradiction:
\begin{align*}
2\dim\mathcal O(\Lambda) & <2\dim\mathcal O(\Upsilon)=\dim\mathcal N(\operatorname{sh}(\Upsilon))
                          \leq 2 \dim\Ni(\Upsilon^++\Upsilon^-)\\
                         & =2pq+\dim\mathcal M(\Upsilon^+,\Upsilon^-)
                          =2pq+\dim\mathcal M(\Lambda^+,\Lambda^-).
\end{align*}

$(3)\implies(1)$:
Suppose $(1)$ does not hold.
It follows from \cref{sydCompare} that some $\pm$-filling corresponding to $\Lambda$ contains two consecutive rows of the same odd length with opposite signs.
We construct $\Upsilon$, by moving a box in $\Lambda$, (and rearranging some rows if necessary), see below:
\begin{center}
\begin{tikzcd}
\ytableaushort{+-+-+,-+-+-}\arrow[r,mapsto]&\ytableaushort{+-+-+-,-+-+}
\end{tikzcd}
\end{center}
It is clear that $(\Upsilon^+,\Upsilon^-)=(\Lambda^+,\Lambda^-)$. 
Comparing with \cref{sydPartial}, we deduce $\Lambda\prec\Gamma$.
We deduce that if $(1)$ does not hold, then neither does $(3)$.

$(3)\iff(4)$:
This is a direct consequence of the fact that $\preceq$ is the closure inclusion order.
\end{proof}

\begin{corollary}
\label{asydBoxes}
Consider $\Lambda\in ASYD(q,p)$.
For any $\pm$-filling in the equivalence class $\Lambda$, the following is true:
The number of $\plus$ (resp. \minus) in the $i^{th}$-row is $\Lambda^+(i)$ (resp. $\Lambda^-(i)$).
\end{corollary}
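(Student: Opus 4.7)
The plan is to extract the corollary directly from the characterization of admissibility given by \cref{sydCompare} together with the definition of $\Lambda^\pm$ in \cref{LambdaPM}. The crux is to show that, for admissible $\Lambda$, the per-row sign counts are both invariants of the equivalence class $\sim$ and already arranged into weakly decreasing sequences, so they must coincide with the partitions $\Lambda^+$ and $\Lambda^-$.

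Concretely, I would fix a $\pm$-filling in the class $\Lambda$ and let $a_i^+$ (resp. $a_i^-$) denote the number of $\plus$ (resp. $\minus$) boxes in row $i$. For a row of length $\ell$, one of $a_i^+, a_i^-$ equals $\lfloor \ell/2\rfloor$ and the other $\lceil \ell/2\rceil$; the assignment depends only on the starting sign and therefore is relevant only when $\ell$ is odd. The proof of $(3)\iff(1)$ in \cref{main:admissible}, which runs through \cref{sydCompare}, shows that in an admissible diagram any two rows of the same odd length have the same signature; rows of even length contribute $(\ell/2, \ell/2)$ irrespective of signature. Hence within every block of equi-length rows, all rows share identical values of $a_i^+$ and $a_i^-$, so permuting rows of equal length leaves the sequences $(a_i^\pm)$ unchanged. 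This shows that $(a_i^+)$ and $(a_i^-)$ depend only on the equivalence class $\Lambda$, not on the chosen representative.

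Finally I would verify that $(a_i^+)$ is weakly decreasing in $i$: when rows $i,i+1$ have equal length the previous paragraph gives equality, and when $\ell_i>\ell_{i+1}$ a short parity-by-parity check yields $a_i^+\geq \lfloor \ell_i/2\rfloor\geq \lceil \ell_{i+1}/2\rceil\geq a_{i+1}^+$. The same argument applies verbatim to $(a_i^-)$. Since \cref{LambdaPM} defines $\Lambda^+$ (resp.\ $\Lambda^-$) as the partition obtained by sorting the multiset of row $\plus$-counts (resp.\ $\minus$-counts), the weakly decreasing sequences $(a_i^+)$ and $(a_i^-)$ must coincide with $\Lambda^+$ and $\Lambda^-$ respectively, which is the content of the corollary. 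I do not expect a real obstacle; the only care needed is the parity case analysis for monotonicity, and the proof is essentially a repackaging of the equality condition already established in \cref{sydCompare}.
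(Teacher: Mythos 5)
Your argument is correct and rests on the same foundation the paper intends: the equality case of \cref{sydCompare}, which for admissible $\Lambda$ identifies the per-row sign counts $(a_i^\pm)$ with the weakly decreasing sequences $\Lambda^\pm$. The only remark is that your detour through the odd-length-signature characterization and the parity monotonicity check can be shortcut: admissibility forces equality in \cref{sydWork0} for every $i$, and telescoping those equalities gives $a_i^\pm=\Lambda^\pm(i)$ directly.
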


\section{Matrix Schubert Varieties}
\label{MSV}
Let $V_q$, $V_p$, $H=\Hom(V_q,V_p)$, $K=GL(V_q)\times GL(V_p)$, and $\mathcal O$ be as in \cref{defKH}.
Set $X=X_q\times X_p$, where
\begin{align*}
X_q=\set{F_\bullet=(F_1\subset \cdots\subset F_q\subset V_q)}{\dim F_i=i},\\
X_p=\set{F_\bullet=(F_1\subset \cdots\subset F_p\subset V_p)}{\dim F_i=i},
\end{align*}
are the variety of full flags in $V_q$, $V_p$ respectively.

\subsection{Partial Permutations}
A \emph{partial permutation} $\tau$ is a matrix with entries in the set $\{0,1\}$ in which every row and every column contains at most one non-zero entry.
We denote by $\mathcal{PP}(p,q)$ the set of partial permutations of size $p\times q$.

\subsection{Matrix Schubert Varieties}
\label{fulton}
Consider the variety,
\begin{align*}
C=X\times H= X_q\times X_p\times \Hom(V_q,V_p),
\end{align*}
along with the $K$-action given by,
\begin{align*}
(g,h)\bullet(E_\bullet,F_\bullet,x) =(gE_\bullet,hF_\bullet,hxg^{-1}).
\end{align*}
For $\tau$ a partial permutation of size $p\times q$, let
\begin{equation*}
C(\tau)=\set{(E_\bullet,F_\bullet,x)\in C}{\dim (xE_i+F_j)=j+\sum\limits_{l\leq i}\sum\limits_{k>j}\tau_{kl}},
\end{equation*}
Following \cite{MR1154177}, the $C(\tau)$ are precisely the $K$-orbits in $C$, so that we have,
\begin{equation*}
\nicefrac CK=\mathcal{PP}(p,q).
\end{equation*}
The closure $\overline{C(\tau)}$ is called a \emph{matrix Schubert variety};
we have,
\begin{equation*}
\overline{C(\tau)}=\set{(E_\bullet,F_\bullet,x)\in C}{\dim(xE_i+F_j)\leq j+\sum\limits_{l\leq i}\sum\limits_{k>j}\tau_{kl}}.
\end{equation*}

\subsection{The Steinberg Variety for Matrix Schubert Varieties}
\label{steinMSV}
Following \cref{sub:flags,springerMap,momentH}, the $K$-action on $C$ admits a moment map,
\begin{align*}
\mu_C:T^* X_q\times T^* X_p\times T^*H            & \to\gl(V_q)\times\gl(V_p),\\
\mu_C((E_\bullet,\alpha),(F_\bullet,\beta),(x,y)) & =(\alpha+yx,\beta+xy).
\end{align*}
We deduce that the \emph{Steinberg variety}, $Z_C=\mu_C^{-1}(0)$, has the following description:
\begin{align*}
Z_C & =\set{(E_\bullet,F_\bullet,x,y)\in X\times\mathcal O}{yx E_i\subset E_{i-1},\ xy F_i\subset F_{i-1},\ \forall i}\\
    & =\set{(E_\bullet,F_\bullet,x,y)\in X\times\mathcal O}{(E_\bullet,yx)\in T^* X_q,\,(F_\bullet,xy)\in T^* X_p}.
\end{align*}
For $\tau\in\mathcal{PP}(p,q)$, let $Z_C(\tau)=T^*_CC(\tau)$, i.e., the conormal bundle of $C(\tau)$.
Following \cref{kashiwara}, we have,
\begin{equation*}
\operatorname{Irr}\left(Z_C\right)=\set{\overline{Z_C(\tau)}}{\tau\in\nicefrac CK}.
\end{equation*}

\subsection{The Map \texorpdfstring{$\pr$}{pr}}
The $K$-equivariant map,
\begin{align*}
\pr:Z_C\to\mathcal O,&& (E_\bullet,F_\bullet,x,y)\mapsto (x,y),
\end{align*}
is proper; 
in particular, it sends the $K$-stable irreducible component $\overline{Z_C(\tau)}$ to a closed irreducible $K$-stable subvariety of $\mathcal O$.
Since $\nicefrac{\mathcal O}K$ is finite, this subvariety is necessarily a $K$-orbit closure.
We obtain a map, $\pr:\nicefrac CK\to\nicefrac{\mathcal O}K$,
characterized by 
\begin{equation*}
\pr(\overline{Z_C(\tau)})=\overline{\mathcal O(\pr(\tau))}.
\end{equation*}
Recall the identifications $\nicefrac CK=\mathcal {PP}(p,q)$ and $\nicefrac{\mathcal O}K=SYD(q,p)$.
With an abuse of notation, we also denote the induced map,
$
\pr:\mathcal{PP}(p,q)\to SYD(q,p).
$

\subsection{The Fibre Bundle \texorpdfstring{$Z_C(\Lambda)\to\mathcal O(\Lambda)$}{Z(Lambda)}}
\label{ZLambdaFibreBundle}
Recall from \cref{steinDouble,momentH}, the moment maps,
\begin{align*}
&&\mu_X:T^*X\to\mathcal M && \text{and} && \mu_H:T^*H\to\mathcal M.&&
\end{align*} 
Given $\Lambda\in SYD(q,p)$, let $Z_C(\Lambda)=\pr^{-1}(\mathcal O(\Lambda))$, and consider the induced map,
\begin{equation*}
\pr:Z_C(\Lambda)\to\mathcal O(\Lambda).
\end{equation*}
We see from \cref{steinMSV} that the fibre of $\pr$ over the point $(x,y)\in\mathcal O(\Lambda)$ is precisely
\begin{equation*}
\mu_{X_q}^{-1}(yx)\times\mu_{X_p}^{-1}(xy)=\mu_X^{-1}(yx,xy)=\mu_X^{-1}(\mu_H(x,y)).
\end{equation*}
It follows that
\begin{align*}
\operatorname{Irr}(Z_C(\Lambda))\cong SYT(J(yx))\times SYT(J(xy))=SYT(\Lambda^+)\times SYT(\Lambda^-).
\end{align*}
Further, the pure-dimensionality of Springer fibres, see \cref{springerFibre},
implies that $Z_C(\Lambda)$ is a pure-dimensional fibre bundle over $\mathcal O(\Lambda)$.

\begin{theorem}
\label{LambdaAdmissible}
The image of the map $\pr:\mathcal{PP}(p,q)\to SYD(q,p)$ is precisely the set of admissible signed Young diagrams.
\end{theorem}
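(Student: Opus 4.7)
The plan is to compare the dimensions of $Z_C(\Lambda)$ and $Z_C(\tau)$. Each $Z_C(\tau)=T^*_CC(\tau)$ is the conormal bundle of a smooth $K$-orbit in the smooth variety $C$ and so has dimension $\dim C$; together with $\operatorname{Irr}(Z_C)=\{\overline{Z_C(\tau)}\}$, this says that $Z_C$ is pure of dimension $\dim C$. A signed Young diagram $\Lambda$ lies in the image of $\pr$ if and only if $Z_C(\tau)\cap Z_C(\Lambda)$ is dense in $Z_C(\tau)$ for some $\tau$, or equivalently, some irreducible component of $Z_C$ meets the locally closed stratum $Z_C(\Lambda)=\pr^{-1}(\mathcal O(\Lambda))$ in a subset of full dimension $\dim C$.

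The key computation is the dimension of $Z_C(\Lambda)$. By \cref{ZLambdaFibreBundle}, $Z_C(\Lambda)$ is a pure-dimensional fibre bundle over $\mathcal O(\Lambda)$ with fibre the product of Springer fibres $S_q(yx)\times S_p(xy)$. Applying \cref{dimSpringerFibre} gives
\begin{equation*}
\dim Z_C(\Lambda)=\dim\mathcal O(\Lambda)+\dim X_q+\dim X_p-\tfrac12\dim\mathcal M(\Lambda^+,\Lambda^-),
\end{equation*}
where I have used $\dim\mathcal M(\Lambda^+,\Lambda^-)=\dim\mathcal N(\Lambda^+)+\dim\mathcal N(\Lambda^-)$. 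Comparing with $\dim C=\dim X_q+\dim X_p+pq$, the equation $\dim Z_C(\Lambda)=\dim C$ becomes $2\dim\mathcal O(\Lambda)=2pq+\dim\mathcal M(\Lambda^+,\Lambda^-)$, which is exactly condition (2) of \cref{main:admissible}. For non-admissible $\Lambda$, \cref{sydCompare} yields the strict dominance $\sh\Lambda\prec\Lambda^++\Lambda^-$; since distinct partitions of $p+q$ label nilpotent orbits of strictly different dimensions, combined with \cref{dimMlambda} this gives $2\dim\mathcal O(\Lambda)=\dim\mathcal N(\sh\Lambda)<\dim\mathcal N(\Lambda^++\Lambda^-)=2pq+\dim\mathcal M(\Lambda^+,\Lambda^-)$, and hence $\dim Z_C(\Lambda)<\dim C$.

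The forward inclusion is then immediate: if $\pr(\tau)=\Lambda$, the generic point of $Z_C(\tau)$ lies in $Z_C(\Lambda)$, so $\dim Z_C(\Lambda)\geq\dim Z_C(\tau)=\dim C$, forcing $\Lambda$ to be admissible. For the reverse inclusion, assume $\Lambda$ is admissible, so $\dim Z_C(\Lambda)=\dim C$. Pick any irreducible component $W$ of $Z_C(\Lambda)$; its closure $\overline W$ in $Z_C$ is irreducible of dimension $\dim C=\dim Z_C$, hence is itself an irreducible component of $Z_C$. Writing $\overline W=\overline{Z_C(\tau)}$ and using that $Z_C(\tau)$ is open dense in its closure (being the conormal bundle of the locally closed orbit $C(\tau)$), the intersection $W\cap Z_C(\tau)$ is non-empty, from which $\pr(\tau)=\Lambda$. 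I expect the reverse inclusion to be the main obstacle, since it requires leveraging the pure-dimensionality of $Z_C$ from \cref{steinMSV,kashiwara} to promote an irreducible component of the stratum $Z_C(\Lambda)$ to the closure of an orbit conormal.
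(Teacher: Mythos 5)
Your proof is correct and follows essentially the same route as the paper: compute $\dim Z_C(\Lambda)$ via the fibre-bundle structure over $\mathcal O(\Lambda)$ together with the Springer-fibre dimension formula, and match the equality $\dim Z_C(\Lambda)=\dim C$ against condition (2) of \cref{main:admissible}; your careful unpacking of both inclusions just makes explicit what the paper compresses into the one-line observation that $Z_C$ is pure-dimensional. One phrasing quibble: distinct partitions need not label nilpotent orbits of distinct dimensions in general, but in your case $\sh\Lambda\prec\Lambda^++\Lambda^-$ is a strict dominance (hence strict orbit-closure) relation, so the strict dimension inequality you invoke does hold.
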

\begin{proof}
Observe that $\Lambda\in\pr(\mathcal{PP}(p,q))$ if and only if $\overline{Z_C(\Lambda)}$ contains at least one irreducible component of $Z_C$.
Since $Z_C$ is pure-dimensional, this is equivalent to 
\begin{equation*}
\begin{split}
\dim\overline{Z_C(\Lambda)}=\dim Z_C=\dim C=\dim X+\dim H.
\end{split}
\end{equation*}
Following \cref{ZLambdaFibreBundle,dimSpringerFibre,LambdaPM}, we compute,
\begin{equation*}
\begin{split}
\dim\overline{Z_C(\Lambda)} & =\dim Z_C(\Lambda)=\dim\mathcal O(\Lambda)+\dim\mu_{X_q}^{-1}(yx)+\dim\mu_{X_p}^{-1}(xy)\\
				             & =\dim\mathcal O(\Lambda)+\left(\dim X_q-\nicefrac12\dim\Ni(\Lambda^+)\right)+\left(\dim X_p-\nicefrac12\dim\Ni(\Lambda^-)\right)\\
				             & =\dim\mathcal O(\Lambda)+\dim X-\nicefrac12\dim\Ni(\Lambda^+)-\nicefrac12\dim\Ni(\Lambda^-).
\end{split}
\end{equation*}
It follows that $\Lambda\in\pr\left(\mathcal{PP}(p,q)\right)$ if and only if 
\begin{align*}
     & \dim\mathcal O(\Lambda)-\nicefrac12\dim\Ni(\Lambda^+)-\nicefrac12\dim\Ni(\Lambda^-)=\dim H,\\
\iff & 2\dim\mathcal O(\Lambda)={\dim\Ni(\Lambda^+)}+\dim\Ni(\Lambda^-)+2pq,
\end{align*}
which is one of the characterizations of admissibility, see \Cref{main:admissible}.
\end{proof}

We now have all the pieces needed to prove the main result of this section.
\begin{mainTheorem}
\label{main:bijection}
Consider $\tau\in\mathcal{PP}(p,q)$, and let
$(E_\bullet,F_\bullet,x,y)$ be a generic point in $C(\tau)$.
Let $\Lambda=\pr(\tau)$, $\Q=\Tab(yx,E_\bullet)$, and $\P=\Tab(xy,F_\bullet)$.
We have a bijection,
\begin{equation*}
\mathcal{PP}(p,q)=\bigsqcup\limits_{\Lambda\in ASYD(q,p)}SYT(\Lambda^+)\times SYT(\Lambda^-),
\end{equation*}
given by $\tau\mapsto (\Lambda,\Q,\P)$.
\end{mainTheorem}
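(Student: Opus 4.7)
The plan is to compare two descriptions of $\operatorname{Irr}(Z_C)$ and observe that they must coincide index-wise. The first description, coming from the general Steinberg-variety machinery of \cref{kashiwara} applied to the $K$-action on $C$, is
\[
\operatorname{Irr}(Z_C)=\{\overline{Z_C(\tau)}\mid\tau\in\mathcal{PP}(p,q)\},
\]
via \cref{steinMSV}. The second description is obtained by slicing $Z_C$ through the proper $K$-equivariant map $\pr:Z_C\to\mathcal O$. Since $\mathcal O=\bigsqcup_\Lambda\mathcal O(\Lambda)$ and each $Z_C(\Lambda)=\pr^{-1}(\mathcal O(\Lambda))$ is a fibre bundle over $\mathcal O(\Lambda)$ with fibre $\mu_{X_q}^{-1}(yx)\times\mu_{X_p}^{-1}(xy)$, the pure-dimensionality of Springer fibres (\cref{springerFibre}) together with \cref{ZLambdaFibreBundle} gives
\[
\operatorname{Irr}(Z_C(\Lambda))\cong SYT(\Lambda^+)\times SYT(\Lambda^-),
\]
with the explicit indexing sending an irreducible component to the pair $(\Tab(yx,E_\bullet),\Tab(xy,F_\bullet))$ computed at a generic point.

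The next step is to decide which pieces $Z_C(\Lambda)$ actually contribute top-dimensional components. The Steinberg variety $Z_C$ is pure-dimensional of dimension $\dim C$; this was already established inside the proof of \cref{LambdaAdmissible}. An irreducible subvariety $W\subseteq Z_C$ is an irreducible component of $Z_C$ if and only if $\dim W=\dim C$. Applied to the irreducible components of $Z_C(\Lambda)$, the dimension computation in \cref{LambdaAdmissible} shows that they have dimension $\dim C$ precisely when $\Lambda$ is admissible. Hence, for $\Lambda$ admissible, the closure of each irreducible component of $Z_C(\Lambda)$ is an irreducible component of $Z_C$; and for $\Lambda$ non-admissible, no component of $Z_C(\Lambda)$ gives a top-dimensional piece. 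Combining the two descriptions yields
\[
\{\overline{Z_C(\tau)}\mid\tau\in\mathcal{PP}(p,q)\}=\operatorname{Irr}(Z_C)=\bigsqcup_{\Lambda\in ASYD(q,p)}\operatorname{Irr}(Z_C(\Lambda))=\bigsqcup_{\Lambda\in ASYD(q,p)}SYT(\Lambda^+)\times SYT(\Lambda^-),
\]
which is the asserted bijection.

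It remains only to verify that under this identification the explicit recipe $\tau\mapsto(\Lambda,\Q,\P)$ is the correct one. By construction, $\overline{Z_C(\tau)}$ maps under $\pr$ to $\overline{\mathcal O(\pr(\tau))}$, so the admissible signed Young diagram attached to $\overline{Z_C(\tau)}$ is exactly $\Lambda=\pr(\tau)$. Next, for a generic point $(E_\bullet,F_\bullet,x,y)\in Z_C(\tau)$, the fibre of $\pr$ through this point is $\mu_{X_q}^{-1}(yx)\times\mu_{X_p}^{-1}(xy)$, whose irreducible-component decomposition is, by \cref{springerFibre}, indexed by the tableau pair $(\Tab(yx,E_\bullet),\Tab(xy,F_\bullet))$. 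This is precisely $(\Q,\P)$, matching the recipe.

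The main obstacle in carrying out this plan is a bookkeeping hazard rather than a deep difficulty: one has to be sure that the equality $\dim Z_C=\dim C$ and the pure-dimensionality of $Z_C$ really do follow from \cref{kashiwara} applied here, and that the admissibility criterion is correctly imported from \cref{main:admissible} via the dimension formula used in \cref{LambdaAdmissible}. Once these two facts are pinned down, the bijection is a formal consequence of the two parametrizations of the components of $Z_C$ coinciding stratum-by-stratum over $\mathcal O$.
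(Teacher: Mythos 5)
Your argument is correct and follows essentially the same route as the paper's own proof: both index $\operatorname{Irr}(Z_C)$ once by $\mathcal{PP}(p,q)$ via the general Steinberg machinery and once by slicing over $\mathcal O$ via $\pr$, using the pure-dimensionality of $Z_C(\Lambda)$ and the dimension count of \cref{LambdaAdmissible} to see that exactly the admissible $\Lambda$ contribute. If anything, you spell out more carefully than the paper why \emph{every} component of $Z_C(\Lambda)$ (not just one) is a component of $Z_C$ when $\Lambda$ is admissible.
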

\begin{proof}
The bijection $\operatorname{Irr}(Z_C)=\mathcal{PP}(p,q)$ is a consequence of \cref{kashiwara,fulton}.
Now consider $\tau\in\mathcal{PP}(p,q)$, and let $\Lambda=\pr(\tau)$.
It follows from \cref{LambdaAdmissible} that $\Lambda\in ASYD(q,p)$.
Combined with \cref{ZLambdaFibreBundle}, this yields
\begin{equation*}
\operatorname{Irr}(Z_C)=\bigsqcup\limits_{\Lambda\in ASYD(q,p)}SYT(\Lambda^+)\times SYD(\Lambda^-).
\end{equation*}
Finally, we see from \cref{springerFibre} that the indexing of $\operatorname{Irr}(Z_C)$
is given precisely by the conditions $\Tab(yx,E_\bullet)=\Q$ and $\Tab(xy,F_\bullet)=\P$.
\end{proof}

\section{Combinatorial Description of the Bijection}
Let $V_q$ and $V_p$ be as in \cref{SYD}, and set $V=V_q\oplus V_p$.
We identify $V_q$ and $V_p$ with the subspaces $V_q\oplus0$ and $0\oplus V_p$ of $V$ respectively.
We fix $\underline a=(1,\cdots,1,p)$ and $\underline b=(q,1,\cdots,1)$, compositions of $N=p+q$, and denote
\begin{align*}
Y&=\fl aV\times\fl bV.
\end{align*}
We identify $K=GL(V_q)\times GL(V_p)$ as a subgroup of $G=GL(V)$ via the natural diagonal embedding.
As in \cref{MSV}, we set
\begin{align*}
X_q  & =\Fl(V_q),      & X_p    & =\Fl(V_p),      & X & =X_q\times X_p,\\
   H & =\Hom(V_q,V_p), & H^\vee & =\Hom(V_p,V_q), & C & =X\times H.
\end{align*}

In this section, we prove \cref{main:PPtoTriple,main:tripleToPP}.
Our main tool is \cref{relateRSK}, which relates the Steinberg variety $Z_Y$ of $Y$ with the Steinberg variety $Z_C$ of $C$.

\subsection{The Embedding \texorpdfstring{$C\into Y$}{C into Y}}
Given $x\in\Hom(V_q,V_p)$, let $\widetilde x\in\gl(V)$ be the invertible linear map given by
\begin{equation*}
\widetilde x(v,w)=(v,xv+w).
\end{equation*}
In matrix form, we have,
\begin{align*}
\widetilde x=
\begin{pmatrix}
I_q & 0\\
x & I_p
\end{pmatrix}
,&&\widetilde x^{-1}=
\begin{pmatrix}
I_q & 0\\
-x  & I_p
\end{pmatrix},
\end{align*}
where $I_q$ and $I_p$ are identity matrices.
Next, for $(E_\bullet,F_\bullet)\in X_q\times X_p$, let 
\begin{align*}
\widetilde E_\bullet=(\widetilde E_1,\cdots,\widetilde E_{q+1}),&&\widetilde F_\bullet=(\widetilde F_0,\cdots,\widetilde F_p),
\end{align*}
be the partial flags given by
\begin{align*} 
\widetilde E_i=
\begin{cases}
E_i &\text{for }1\leq i\leq q,\\
V   &\text{for }i=q+1,
\end{cases}&&
\widetilde F_i=
\begin{cases} V_q&\text{for }i=0,\\
V_q\oplus F_i   &\text{for }1\leq i\leq p.
\end{cases}
\end{align*}
Observe that the embedding, 
\begin{align}
\label{mapEmbed}
\phi:C\into Y,&& (E_\bullet,F_\bullet,x)\mapsto (\widetilde x\widetilde E_\bullet,\widetilde F_\bullet),
\end{align}
is $K$-equivariant.
This is most easily seen via matrix calculation,
\begin{align*}
\begin{pmatrix}
g&0\\
0&h\\
\end{pmatrix}
\begin{pmatrix}
I_q & 0\\
x & I_p
\end{pmatrix}
\begin{pmatrix}
g&0\\
0&h\\
\end{pmatrix}
^{-1}
=
\begin{pmatrix}
I_q & 0\\
hxg^{-1}  & I_p\\
\end{pmatrix}.
\end{align*}

\subsection{Induced Map on Orbits} 
\label{hatMap}
Let $\nicefrac CK\into\nicefrac YK$ be the map on $K$-orbits induced from $\phi$,
and let $\nicefrac YK\to\nicefrac YG$ be the natural projection. 
We have a composite map,
\begin{align*}
\widehat{\ }\,:\nicefrac CK\into\nicefrac YK\to\nicefrac YG, && \tau\mapsto\widehat\tau.
\end{align*}
Here $\tau\in \mathcal{PP}(p,q)$,
and $\widehat\tau$ is a $(p+1)\times(q+1)$ matrix satisfying the conditions described in \cref{sub:schubert}.

We describe this map in some detail.
Fix a basis $e_1,\cdots,e_q$ of $V_q$, and a basis $e_{q+1},\cdots,e_{q+p}$ of $V_p$.
Given $\tau\in \mathcal{PP}(p,q)$, we interpret $\tau$ as an element of $\Hom(V_q,V_p)$ with respect to the basis $e_1,\cdots,e_q$ and $e_{q+1},\cdots,e_{q+p}$.
Then, we have $(E_\bullet,F_\bullet,\tau)\in C_\tau$,
where $E_\bullet$ and $F_\bullet$ are the flags given by
\begin{align*}
E_i=\left\langle e_1,\cdots,e_i\right\rangle, && F_j=\left\langle e_{q+1},\cdots,e_{q+j}\right\rangle.
\end{align*}
We can now directly compute the matrix $\widehat\tau$.
It is obtained from $\tau$ by adjoining a row to the top and a column to the right,
\begin{equation}
\label{tauHat}
\widehat\tau=
\begin{pmatrix}
 l_1       & \cdots & l_q       & r\\
 \tau_{11} & \cdots & \tau_{1q} & m_1\\
 \vdots    & \ddots & \vdots    & \vdots\\
 \tau_{p1} & \cdots & \tau_{pq} & m_p
 \end{pmatrix}
\end{equation}

The entries in the top row and rightmost column of $\widehat\tau$ are uniquely determined by the conditions described in \cref{sub:schubert}. 
In particular, we have
\begin{align*}
l_j=1-\sum\limits_i\tau_{ij}, && m_i=1-\sum\limits_j\tau_{ij}, && r=\rk(\tau)=\sum\tau_{ij}.
\end{align*}
Observe that $l_j$ (resp. $m_i$) is $0$ if the $j^{th}$-column (resp. $i^{th}$-row) of $\tau$ contains a non-zero entry, and $1$ otherwise.

As a corollary, we deduce that 
\begin{equation}
\label{hatBij}
\operatorname{Irr}(Z_Y)=\set{Z_Y(\widehat\tau)}{\tau\in\mathcal{PP}(p,q)}.
\end{equation}
Indeed, the above calculation shows that the map $\tau\mapsto\widehat\tau$ is a bijection between $\mathcal{PP}(p,q)$ and $\nicefrac YG$.
The latter indexes $\operatorname{Irr}(Z_Y)$, see \cref{kashiwara}.

\begin{proposition}
\label{relateRSK}
Consider $(E_\bullet,F_\bullet,x)\in C$,
and $z\in\gl(V)$ given by
\begin{align*}
z=
\begin{pmatrix}\alpha   &\beta\\ \gamma &\delta \end{pmatrix}
\in
\begin{pmatrix}\gl(V_q)&\Hom(V_p,V_q)\\\Hom(V_q,V_p)&\gl(V_p)\end{pmatrix}
=\gl(V).
\end{align*}
Then $(\widetilde x\widetilde E_\bullet,\widetilde F_\bullet,z)\in Z_Y$, if and only if
\begin{align*}
&& && \alpha=\gamma=\delta-x\beta=0, && \text{and } && (E_\bullet,F_\bullet,x,\beta)\in Z_C. && &&
\end{align*}
In particular, the fibre of $Z_Y(\widehat\tau)\to Y$ at the point $(\widetilde x\widetilde E_\bullet,\widetilde F_\bullet)$ is isomorphic to the fibre of $Z_C(\tau)\to C$ at the point $(E_\bullet,F_\bullet,x)$.
\end{proposition}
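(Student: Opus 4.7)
My plan is to directly unpack the Steinberg conditions on both sides and reduce one to the other via a block-matrix computation. By \cref{steinDouble}, the condition $(\widetilde x\widetilde E_\bullet,\widetilde F_\bullet,z)\in Z_Y$ amounts to $z$ being nilpotent together with strict preservation of each of the two full flags. I would first analyze the $\widetilde F$-flag, which is $0\subset V_q\subset V_q\oplus F_1\subset\cdots\subset V$: the bottom step forces $zV_q\subset0$, giving $\alpha=\gamma=0$, and the subsequent steps $z(V_q\oplus F_j)\subset V_q\oplus F_{j-1}$ reduce, using $zV_q=0$, to $\delta F_j\subset F_{j-1}$ for $1\leq j\leq p$.

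For the $\widetilde E$-conditions the flag has $\widetilde x$ mixed in, which I would strip off by conjugating: setting $w=\widetilde x^{-1}z\widetilde x$, the inclusion $z\cdot\widetilde x\widetilde E_i\subset\widetilde x\widetilde E_{i-1}$ becomes $w\widetilde E_i\subset\widetilde E_{i-1}$. A direct block-matrix multiplication yields
\[
w=\begin{pmatrix}\alpha+\beta x&\beta\\ \gamma+\delta x-x\alpha-x\beta x&\delta-x\beta\end{pmatrix}.
\]
The top step $i=q+1$ demands $wV\subset V_q$, i.e.\ the bottom row of $w$ vanishes; combined with $\alpha=\gamma=0$ already obtained, this forces $\delta-x\beta=0$ on the $V_p$-block and is automatic on the $V_q$-block (since $\delta x-x\beta x=0$ once $\delta=x\beta$). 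The lower steps $i\leq q$ collapse to $(\alpha+\beta x)E_i\subset E_{i-1}$, i.e.\ $\beta x E_i\subset E_{i-1}$. Together with the $\widetilde F$-condition $\delta F_j\subset F_{j-1}$ rewritten via $\delta=x\beta$ as $x\beta F_j\subset F_{j-1}$, these are exactly the defining conditions for $(E_\bullet,F_\bullet,x,\beta)\in Z_C$.

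For the converse, one runs the same computation backwards; the only nontrivial check is nilpotence of $z$, but with $\alpha=\gamma=0$ and $\delta=x\beta$ the matrix $z$ is block upper-triangular with diagonal blocks $0$ and $x\beta$, so it is nilpotent iff $x\beta$ is, which holds because $(x,\beta)\in\mathcal O$. For the ``in particular'' clause, the bijection $z\leftrightarrow\beta$ established above identifies the fibre of $Z_Y\to Y$ at $(\widetilde x\widetilde E_\bullet,\widetilde F_\bullet)$ with the fibre of $Z_C\to C$ at $(E_\bullet,F_\bullet,x)$; since $\phi$ takes $C(\tau)$ into $Y(\widehat\tau)$ by \cref{hatMap}, this identification restricts to the conormal fibres of $Z_Y(\widehat\tau)$ and $Z_C(\tau)$. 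The main obstacle is purely bookkeeping: keeping track of the two different indexing conventions for $\widetilde E_\bullet$ and $\widetilde F_\bullet$, and correctly tracing which row or column of the block form of $w$ is killed by which flag condition. The underlying algebra is otherwise elementary.
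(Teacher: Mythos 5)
Your proof is correct and follows essentially the same route as the paper: translate the two flag-preservation conditions defining $Z_Y$, handle the $\widetilde F$-flag directly to get $\alpha=\gamma=0$ and $\delta F_j\subset F_{j-1}$, and conjugate by $\widetilde x$ to convert the $\widetilde E$-conditions into $\delta=x\beta$ and $\beta xE_i\subset E_{i-1}$. The only (harmless) differences are that you compute the full conjugate $\widetilde x^{-1}z\widetilde x$ before substituting $\alpha=\gamma=0$ and you verify nilpotence explicitly, which is in any case automatic from the flag conditions.
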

\begin{proof}
Following \cref{steinDouble}, we have $(\widetilde x\widetilde E_\bullet,\widetilde F_\bullet,z)\in Z_Y$ if and only if
\begin{align}
\label{condOnG} z\widetilde F_i\subset\                                           & \widetilde F_{i-1},                                                    &  & \text{for }0\leq i\leq p,\\
\label{condOnF} z\widetilde x\widetilde E_i\subset\widetilde x\widetilde E_i \iff & \widetilde x^{-1}z\widetilde x\widetilde E_i\subset\widetilde E_{i-1}, &  & \text{for }1\leq i\leq q+1.
\end{align}
\Cref{condOnG} is equivalent to 
\begin{align}
\label{cond1Ct}
    &&\alpha=\gamma=0,  &&\text{and}&&\delta F_i\subset F_{i-1},\ \forall\,1\leq i\leq p.   &&
\end{align}
For \cref{condOnF}, we compute,
\begin{align*}
\widetilde x^{-1}z\widetilde x=
\begin{pmatrix}
1&0\\
-x&1
\end{pmatrix}
\begin{pmatrix}
0&\beta\\
0&\delta
\end{pmatrix}
\begin{pmatrix}
1&0\\
x&1
\end{pmatrix}
=
\begin{pmatrix}
\beta x&\beta\\
-x\beta x+\delta x&-x\beta+\delta
\end{pmatrix}.
\end{align*}
It follows that \cref{condOnF} is equivalent to 
\begin{align}
\label{cond2Ct}
    &&\delta=x\beta,&&\text{and}&&\beta xE_i\subset E_i,\ \forall\,1\leq i\leq q.&&
\end{align} 
We see that \cref{condOnG,condOnF} hold if and only if \cref{cond1Ct,cond2Ct} hold.
\end{proof}

\begin{mainTheorem}
Consider $\tau\in\mathcal{PP}(p,q)$, set $\Lambda=\pr(\tau)$, 
and let
\begin{align*}
(\Q,\P)\in SYT(\Lambda^+)\times SYT(\Lambda^-)
\end{align*}
be the tableaux pair corresponding to $\tau$ via the bijection in \cref{main:bijection}.
Let $\widehat\tau$ be given by \cref{tauHat}, and let $(\widehat\Q,\widehat\P)$ be the tableaux pair corresponding to $\widehat\tau$ via the Robinson-Schensted-Knuth correspondence.
Let $\lambda=\sh{\widehat\Q}=\sh{\widehat\P}$.
Then
\begin{align*}
\Q=\widehat\Q(q),&& \P=\operatorname{Rect}(\widehat\P,q), &&\sh\Lambda=\sh\Q+\sh\P,
\end{align*}
and a $\pm$-filling in the equivalence class $\Lambda$ is given by the following rule:
the first box of the $i^{th}$ row is $\plus$ if $\sh\P(i)<\lambda(i)$;
otherwise it is $\minus$.
\end{mainTheorem}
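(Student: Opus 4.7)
The plan is to lift the generic datum on $Z_C(\tau)$ to $Z_Y(\widehat\tau)$ via \cref{relateRSK}, extract $(\widehat\Q, \widehat\P)$ from the geometric RSK interpretation \cref{steinberg}, and then read off each assertion by combining \cref{rectification,nuLambda,work2,asydBoxes}. Concretely, I fix a generic point $(E_\bullet, F_\bullet, x, y) \in Z_C(\tau)$, so that by \cref{main:bijection}, $\Q = \Tab(yx, E_\bullet)$ and $\P = \Tab(xy, F_\bullet)$, and by \cref{LambdaAdmissible}, $\Lambda \in ASYD(q,p)$. Setting $z = \left(\begin{smallmatrix}0 & y \\ 0 & xy\end{smallmatrix}\right) \in \gl(V)$, \cref{relateRSK} puts $(\widetilde x \widetilde E_\bullet, \widetilde F_\bullet, z) \in Z_Y(\widehat\tau)$, and its fibrewise bijection propagates genericity. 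Hence by \cref{steinberg}, the RSK pair of $\widehat\tau$ is $(\widehat\Q, \widehat\P) = (\Tab(z, \widetilde x \widetilde E_\bullet), \Tab(z, \widetilde F_\bullet))$.

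To obtain $\widehat\Q(q) = \Q$, note that $\widetilde x$ restricts to an isomorphism $E_i \iso \widetilde x E_i$ conjugating $yx$ to $z$, since a direct check gives $z\widetilde x(v, 0) = (yxv, xyxv) = \widetilde x(yxv, 0)$ for $v \in V_q$; therefore $J(z|\widetilde x E_i) = J(yx|E_i) = \Q(i)$ for every $i \leq q$. To obtain $\operatorname{Rect}(\widehat\P, q) = \P$, I apply \cref{rectification} to the first flag step $\widetilde F_0 = V_q$ (of dimension $q$): the projection $V \to V_p$ identifies $V/V_q$ with $V_p$, and under this identification the induced $z$-action is $xy$ (since $z(v_q, v_p) \equiv xyv_p \pmod{V_q}$) while the quotient flag $\widetilde F_\bullet/V_q$ becomes $F_\bullet$, yielding $\operatorname{Rect}(\widehat\P, q) = \Tab(xy, F_\bullet) = \P$. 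The shape equation $\sh\Lambda = \sh\Q + \sh\P$ is then immediate from admissibility ($\sh\Lambda = \Lambda^+ + \Lambda^-$) together with $\sh\Q = \Lambda^+$ and $\sh\P = \Lambda^-$.

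For the sign rule, I view $V = V_q \oplus V_p$ as the $R$-module of \cref{sec:syd} with $\alpha = x$ and $\beta = y$, so that $z = \beta + \alpha\beta$ and $\lambda = J(z)$ is exactly the partition produced combinatorially by \cref{nuLambda}. By \cref{asydBoxes}, row $i$ of $\Lambda$ contains $\Lambda^+(i)$ pluses and $\Lambda^-(i)$ minuses; applying \cref{work2} to each indecomposable summand $U_\ell^{\pm}$, the $i$-th row contributes to $\lambda$ a single block of size $\Lambda^-(i) + \epsilon_i$ (with $\epsilon_i = 1$ if row $i$ starts with \plus, else $0$) together with $\Lambda^+(i) - \epsilon_i$ singleton blocks. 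Ordering the rows of $\Lambda$ by $\sh\Lambda$ decreasing and placing \plus-starting rows first within each equi-length group, I claim the sequence $\{\Lambda^-(i) + \epsilon_i\}_i$ is already weakly decreasing; since each such entry is $\geq 1$, these form the first rows of the sorted $\lambda$. This gives $\lambda(i) = \Lambda^-(i) + \epsilon_i$, whence $\sh\P(i) = \Lambda^-(i) < \lambda(i)$ iff $\epsilon_i = 1$, i.e., iff row $i$ starts with \plus.

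The hard part is the monotonicity claim in the last paragraph. Within an equi-$\sh\Lambda$ group, the $\Lambda^\pm$-values coincide (since $\sh\Lambda = \Lambda^+ + \Lambda^-$ as partitions forces matching row sums), so the ordering convention handles the claim trivially. Across groups, the only potentially violating configuration would be $\sh\Lambda(i) > \sh\Lambda(i+1)$ with row-$i$ starting with \minus, row-$(i+1)$ starting with \plus, and $\Lambda^-(i) = \Lambda^-(i+1)$; but the former forces $\Lambda^-(i) = \lceil \sh\Lambda(i)/2 \rceil$ and the latter forces $\Lambda^-(i+1) = \lfloor \sh\Lambda(i+1)/2 \rfloor$, and a one-line parity check rules out $\lceil \sh\Lambda(i)/2 \rceil = \lfloor \sh\Lambda(i+1)/2 \rfloor$ when $\sh\Lambda(i) > \sh\Lambda(i+1)$.
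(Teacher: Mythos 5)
Your proposal is correct and follows essentially the same route as the paper: lifting a generic point of $Z_C(\tau)$ to $Z_Y(\widehat\tau)$ via \cref{relateRSK}, reading off $\widehat\Q(q)$ and $\operatorname{Rect}(\widehat\P,q)$ from the conjugation $\widetilde x^{-1}z\widetilde x$ and the quotient by $\widetilde F_0=V_q$, and deriving the sign rule from the indecomposable decomposition together with \cref{work2,nuLambda}. Your closing monotonicity/parity check is just a more explicit version of the paper's assertion that the ordered sequence $\alpha_i=J(z|V_i)(1)$ is weakly decreasing, so no further comment is needed.
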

\begin{proof}
Consider $(E_\bullet,F_\bullet,x, y)\in Z_C(\tau)$ for some $\tau\in \mathcal{PP}(p,q)$, and set
\begin{equation*}
z=
\begin{pmatrix}
0 & y\\ 
0 &x y
\end{pmatrix}
\in
\begin{pmatrix}\gl(V_q)&\Hom(V_p,V_q)\\\Hom(V_q,V_p)&\gl(V_p)\end{pmatrix}
=\gl(V).
\end{equation*}

Let $\widehat\tau$ be given by \cref{tauHat},
and let $(\widehat\Q,\widehat\P)$ be the tableaux pair corresponding to the matrix $\widehat\tau$ via the Robinson-Schensted-Knuth correspondence.
Recall that $\widehat{\Q}\in SYT(\lambda,\underline a)$, and $\widehat{\P}\in SYT(\lambda,\underline b)$, where $\lambda$ is the Jordan type of $z$.

Let $Z'$ be the fibre of $Z_Y(\widehat\tau)$ over the point $(\widetilde x\widetilde E_\bullet,\widetilde F_\bullet)$.
Recall the dense open subset $Z_Y^\circ(\widehat\tau)\subset Z_Y(\widehat\tau)$ from \cref{ZYcirc}.
Since $Y(\widehat\tau)$ is $G$-homogeneous, the intersection $Z_Y^\circ(\widehat\tau)\cap Z'$ is a dense open subset of $Z'$.
Following \cref{relateRSK}, $Z'$ is isomorphic to the fibre of $Z_C(\tau)\to C$ at the point $(E_\bullet,F_\bullet,x)$; 
hence for \emph{generic} $(E_\bullet,F_\bullet,x, y)\in Z_C(\tau)$,
we have $z\in Z^\circ_Y(\widehat\tau)$, i.e,
\begin{equation}
\label{work1}
\begin{split}
\Tab(z,\widetilde F_\bullet)=\widehat{\P},\qquad \Tab(\widetilde x^{-1}z\widetilde x,\widetilde E_\bullet)=\Tab(z,\widetilde x\widetilde E_\bullet)=\widehat{\Q}.
\end{split}
\end{equation}
Combined with \cref{rectification}, this yields $\Tab(x y,F_\bullet)=\operatorname{Rect}(\widehat\P,q)$, see also \cref{mapEmbed}. 
A further calculation,
\begin{equation}
\label{tildeXconjugation}
\widetilde x^{-1}z\widetilde x=
\begin{pmatrix}
1  & 0\\
-x & 1
\end{pmatrix}
\begin{pmatrix}
0 &  y\\
0 & x y
\end{pmatrix}
\begin{pmatrix}
1 & 0\\
x & 1
\end{pmatrix}
=
\begin{pmatrix}
 y x &  y
\\0     & 0
\end{pmatrix},
\end{equation}
yields the claim $\Tab( y x,E_\bullet)=\Q(q)$, see \cref{springerFibre}.

The claim $ \sh\Lambda=\sh{\Lambda^+}+\sh{\Lambda^-}=\sh\Q+\sh\P$ follows from the admissibility of $\Lambda$, see \cref{LambdaAdmissible,main:admissible}.

Let $V=V_1\oplus V_2\oplus\cdots\oplus V_n$  be decomposition of $V$ as in \cref{colorDecomposition}.
Without loss of generality, we may assume that the indexing $V_i$ satisfies the following rules:
\begin{enumerate}
\item $U_k^\pm$ appears before $U_l^\pm$ if $k>l$.
\item $U_k^-$ appears before $U_k^+$.
\end{enumerate}
We construct a $\pm$-filling in the equivalence class $\Lambda$ by setting the $i^{th}$-row to be the signed Young diagram corresponding to the indecomposable subspace $V_i$.

Since $\Lambda$ is admissible, the indexing ensures that for all $i$, we have,
\begin{align*}
\Lambda^+(i)=\dim (V_i\cap V_q), && \Lambda^-(i)=\dim (V_i\cap V_p),
\end{align*}
see \cref{sydCompare,defn:admissible}. 
For $1\leq i\leq k$, let $\alpha_i=J(z|V_i)(1)$, the first entry of the partition $J(z|V_i)$.
The indexing ensures that the sequence $\alpha_i$ is weakly decreasing.
Following \cref{nuLambda}, we see that $\lambda=(\alpha_1,\cdots,\alpha_n,1,1,\cdots,1)$.

Now, for each $i$, precisely one of the following holds:
\begin{align*}
\Lambda^+(i)=\Lambda^-(i)=\alpha_i-1&=l,   && \text{if }V_i\cong U_{2l}^-,\\
\Lambda^+(i)=\Lambda^-(i)+1=\alpha_i&=l+1, && \text{if }V_i\cong U_{2l+1}^+,\\
\Lambda^+(i)=\Lambda^-(i)=\alpha_i&=l,     && \text{if }V_i\cong U_{2l}^+,\\
\Lambda^+(i)+1=\Lambda^-(i)=\alpha_i&=l+1, && \text{if }V_i\cong U_{2l+1}^-.
\end{align*}
In the first two cases, the first box of the signed Young diagram corresponding to $V_i$ is \plus.
In the other cases, it is \minus.
We see that this $\pm$-filling is precisely as described in the theorem.
\end{proof}

\begin{mainTheorem}
Consider $\Lambda\in ASYD(q,p)$, $\Q\in SYD(\Lambda^+)$, and $\P\in SYD(\Lambda^-)$.
Let $\lambda$ be as in \cref{nuLambda}, and let
\begin{align*}
\widehat\Q=(\Q;\lambda),&& \widehat\P=\ev{(\ev\P;\lambda)},&&\widehat\tau\overset{RSK}\longleftrightarrow(\widehat\Q,\widehat\P).
\end{align*}
Then $\tau$ is the south-west sub-matrix of $\widehat\tau$ of size $p\times q$.
\end{mainTheorem}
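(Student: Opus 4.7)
The strategy is to verify that the construction in \cref{main:tripleToPP} inverts the map from \cref{main:PPtoTriple}. Let $\tau$ denote the partial permutation corresponding to $(\Lambda,\Q,\P)$ via the bijection of \cref{main:bijection}, and let $(\widehat\Q',\widehat\P')$ be the RSK pair of $\widehat\tau$ as in \cref{main:PPtoTriple}. The goal is to show $\widehat\Q'=\widehat\Q$ and $\widehat\P'=\widehat\P$, after which the conclusion that $\tau$ is the south-west $p\times q$ sub-matrix of $\widehat\tau$ is forced by the embedding described in \cref{tauHat}.

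The identity $\widehat\Q'=\widehat\Q$ is essentially immediate. By \cref{main:PPtoTriple}, $\widehat\Q'$ has content $(1,\ldots,1,p)$, shape $\lambda$, and first $q$ partitions forming $\Q$. The content then forces a single final jump of $p$ boxes from $\Q$ to $\lambda$, so $\widehat\Q'$ must coincide with $(\Q;\lambda)=\widehat\Q$.

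The heart of the argument is $\widehat\P'=\ev{(\ev\P;\lambda)}$. By \cref{main:PPtoTriple}, $\widehat\P'$ has shape $\lambda$, content $(q,1,\ldots,1)$, and $\operatorname{Rect}(\widehat\P',q)=\P$. Accordingly, it suffices to prove the combinatorial identity
\begin{equation*}
\ev T=(\ev\operatorname{Rect}(T,q);\lambda)
\end{equation*}
for any tableau $T$ of shape $\lambda$ and content $(q,1,\ldots,1)$; applying $\ev$ to both sides with $T=\widehat\P'$ then yields $\widehat\P'=\ev(\ev\P;\lambda)$. The plan is to prove this identity using the geometric realizations of \cref{rectification,evacuation}. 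Write $T=\Tab(x,F_\bullet)$ for a generic pair, with $\dim F_1=q$. By \cref{evacuation}, the $i$-th partition of $\ev T$ is $J(x|V/F_{p+1-i})$, so the final partition is $J(x|V)=\lambda$. The rectification realizes as $\operatorname{Rect}(T,q)=\Tab(x|V/F_1,F_\bullet/F_1)$, and under the identification $(V/F_1)/(F_{j+1}/F_1)=V/F_{j+1}$, its evacuation has $i$-th partition $J(x|V/F_{p+1-i})$ for $1\le i\le p$. These match the first $p$ partitions of $\ev T$, establishing the identity.

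Putting the two pieces together, the RSK pair of $\widehat\tau$ is precisely $(\widehat\Q,\widehat\P)$ as defined in the statement. By construction of the embedding $\phi$ in \cref{mapEmbed} and the formula in \cref{tauHat}, the south-west $p\times q$ sub-matrix of $\widehat\tau$ is $\tau$, as required. The principal technical obstacle is the evacuation/rectification identity; one must track carefully how the flag indexing shifts when passing to the quotient $V/F_1$, but the calculation is routine once the geometric realizations are in place.
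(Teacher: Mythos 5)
Your proposal follows essentially the same route as the paper: both arguments come down to the geometric realizations of rectification and evacuation from \cref{rectification,evacuation}, applied to the quotients $\nicefrac{\widetilde F_p}{\widetilde F_i}\cong\nicefrac{F_p}{F_i}$, so that the first $p$ entries of $\ev{\widehat\P}$ agree with those of $\ev\P$ while the last entry is the full shape; the paper just carries this out directly on the generic point $(E_\bullet,F_\bullet,x,y)\in Z_C(\tau)$ and the matrix $z$ from \cref{work1}, rather than isolating your intermediate identity $\ev{\T}=(\ev{\operatorname{Rect}(\T,q)};\lambda)$ and inverting \cref{main:PPtoTriple}. The one step you elide is the identification of the common shape $\sh{\widehat\Q}=\sh{\widehat\P}$ with the $\lambda$ of \cref{nuLambda}, which is how $\lambda$ is defined in the statement being proved: you need to know that the Jordan type of the generic $z$ attached to $\widehat\tau$ is computed by the combinatorial recipe of \cref{nuLambda} applied to $\Lambda$ (this is the content of \cref{workQP} in the paper). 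Without that identification your conclusion reads $\widehat\Q=(\Q;\sh{\widehat\Q})$ rather than $\widehat\Q=(\Q;\lambda)$; the fix is one invocation of \cref{nuLambda}, so the gap is minor but should be stated.
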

\begin{proof}
Let $(E_\bullet,F_\bullet,x, y)$ be a generic point in $Z_C(\tau)$, and set
$
z=
\begin{pmatrix}
0 & y\\ 
0 &x y
\end{pmatrix},
$
so that
\begin{align*}
\Tab(z,\widetilde F_\bullet)=\widehat{\P}, && \Tab(\widetilde x^{-1}z\widetilde x,\widetilde E_\bullet)=\widehat{\Q},
\end{align*}
see \cref{work1}.
Following \cref{nuLambda}, we obtain 
\begin{align}
\label{workQP}
\widehat\Q(q+1)=\sh{\widehat\Q}\lambda,&& \widehat\P(p+1)=\sh{\widehat\P}=\lambda.
\end{align}

Recall from \cref{main:bijection} that $\Q=\Tab(yx,E_\bullet)$ and $\P=\Tab(xy,F_\bullet)$.
Following \cref{tildeXconjugation}, we have $z|E_\bullet=yx$,
and hence
\begin{align*}
&&\widehat\Q(i)=\Q(i), && \forall\,1\leq i\leq q,
\end{align*}
Combined with \cref{workQP}, this yields $\widehat\Q=(\Q;\lambda)$.

Next, we see from \cref{evacuation} that
\begin{align*}
\ev{\widehat\P}=(
J(z|\nicefrac{\widetilde F_p}{\widetilde F_{p-1}});
J(z|\nicefrac{\widetilde F_p}{\widetilde F_{p-2}});
\cdots;
J(z|\nicefrac{\widetilde F_p}{\widetilde F_{1}});
J(z|\nicefrac{\widetilde F_p}{\widetilde F_{0}})
)
\end{align*}
Following \cref{work1,mapEmbed}, we have
\begin{align*}
&&J(z|\nicefrac{\widetilde F_p}{\widetilde F_{i}})=J(xy|\nicefrac{F_p}{F_i}),&&\forall\, 1\leq i\leq p,
\end{align*}
and hence,
\begin{align*}
&&\ev{\widehat\P}(i)=\ev\P(i), &&\forall\, 1\leq i\leq p.
\end{align*}
Combined with \cref{workQP}, this yields,
$
\ev{\widehat\P}=(\ev\P;\lambda).
$
Since evacuation is an involution, we obtain the desired formula, $\widehat\P=\ev{(\ev\P;\lambda)}$.

Finally, the relationship between $\widehat\tau$ and $\tau$ follows from \cref{tauHat}.
\end{proof}

\section{Projective Duality for Matrix Schubert Varieties}
\label{duality}
Consider the natural duality $SYD(q,p)\to SYD(p,q)$,
denoted $\Lambda\mapsto\Lambda^\vee$,
which simply switches the \plus\ and \minus\ boxes.
In this section,
we study the duality $(\Lambda,\Q,\P)\mapsto (\Lambda^\vee,\P,\Q)$,
and show that this recovers the projective duality on (projectivized) matrix Schubert fibres.

Let $K=GL(V_q)\times GL(V_p)$, $H=\Hom(V_q,V_p)$, $X=\Fl(V_q)\times\Fl(V_p)$, and $C=X\times H$ be as in \cref{MSV}.

\subsection{Matrix Schubert Fibres}
\label{projectiveDuality}
Fix $(E_\bullet,F_\bullet)\in X$, and let 
\begin{equation*}
B_K=\operatorname{Stab}_K(E_\bullet,F_\bullet).
\end{equation*}

For $\tau\in\mathcal{PP}(q,p)$, let $H(\tau)$ be the fibre of $C(\tau)\to X$ at the point $(E_\bullet,F_\bullet)$.
The $H(\tau)$ are precisely the $B_K$ orbits in $H$, i.e., $\nicefrac BK=\mathcal{PP}(p,q)$.
We call $H(\tau)$ a matrix Schubert fibre in $H$.

\subsection{Duality for partial permutations}
\label{ppduality}
Recall from \cref{defKH} the identification $T^*H=H\times H^\vee$.
We identify $T^*H$ as the cotangent bundle of $H^\vee$ also, via the projection map $\pi:H\times H^\vee\to H^\vee$.
Let $T^*_HH(\tau)$ be the conormal bundle of $H(\tau)$ in $H$,
and set 
\begin{align*}
\left(\overline{H(\tau)}\right)^\vee=\pi(\overline{T^*_HH(\tau)}).
\end{align*}
Now, since $\overline{T^*_HH(\tau)}$ is $B_K$-stable and irreducible, the same is true of $\left(\overline{H(\tau)}\right)^\vee$.
Consequently, $\overline{H(\tau)}^\vee$ is the closure of some matrix Schubert fibre in $H^\vee$, i.e.,
\begin{equation*}
\left(\overline{H(\tau)}\right)^\vee=\overline{H^\vee(\tau^\vee)},
\end{equation*}
for some $\tau^\vee\in\mathcal{PP}(q,p)$.
Following \cite[1.3.C]{MR2394437},
$T^*_HH(\tau)$ is also the conormal variety of $H^\vee(\tau^\vee)$ in $H^\vee$,
and $\mathbb PH^\vee(\tau^\vee)$ is in projective duality with $\mathbb PH(\tau)$.

\begin{proposition}
Let $\Lambda^\vee$ be obtained from $\Lambda$ by switching the \plus\ and \minus\ boxes.
If $\tau\in\mathcal{PP}(p,q)$ corresponds to the triple $(\Lambda,\Q,\P)$ via \cref{main:bijection},
then $\tau^\vee$ corresponds to $(\Lambda^\vee,\P,\Q)$.
\end{proposition}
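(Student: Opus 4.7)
The plan is to show that the construction producing the triple $(\Lambda,\Q,\P)$ in \cref{main:bijection} is covariant under the involution that simultaneously interchanges $V_q\leftrightarrow V_p$, $E_\bullet\leftrightarrow F_\bullet$, $H\leftrightarrow H^\vee$, and $x\leftrightarrow y$, and to identify this involution with the projective duality defining $\tau^\vee$.

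First I observe the symmetry of the Steinberg variety itself. The conditions $yxE_i\subset E_{i-1}$ and $xyF_i\subset F_{i-1}$ defining $Z_C$ in \cref{steinMSV} are interchanged (not altered) by the map $\iota\colon(E_\bullet,F_\bullet,x,y)\mapsto(F_\bullet,E_\bullet,y,x)$ when the image is regarded as a point of the analogous Steinberg variety $Z_{C^\vee}$ built from $C^\vee:=X\times H^\vee$ with the roles of $V_q,V_p$ interchanged. So $\iota$ is an isomorphism between $Z_C$ and $Z_{C^\vee}$, viewed as subvarieties of $\Fl(V_q)\times\Fl(V_p)\times H\times H^\vee$.

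Next I match orbits under $\iota$. The conormal identity $T^*_HH(\tau)=T^*_{H^\vee}H^\vee(\tau^\vee)$ recalled in \cref{ppduality} from \cite[1.3.C]{MR2394437}, together with the fact that $C(\tau)\to X$ is a $K$-equivariant fiber bundle with fiber $H(\tau)$ and that $K$ acts transitively on $X$, gives $\iota(\overline{Z_C(\tau)})=\overline{Z_{C^\vee}(\tau^\vee)}$; in particular $\iota$ carries the generic locus of $Z_C(\tau)$ onto that of $Z_{C^\vee}(\tau^\vee)$. Consequently the triple produced by \cref{main:bijection} for $\tau^\vee$ may be computed from the image under $\iota$ of any generic point of $Z_C(\tau)$.

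Finally, I read off the triple. At the point $\iota(E_\bullet,F_\bullet,x,y)$, the underlying $\widehat A_2$-representation is the same $R$-module as at $(E_\bullet,F_\bullet,x,y)$, but the $\mathbb Z/2$-grading and the actions of $\alpha,\beta$ are simultaneously reversed. This involution exchanges the indecomposables $U_k^+\leftrightarrow U_k^-$ of \cref{nonComRing}, so by the rule of \cref{sec:syd} the associated signed Young diagram has every box sign-flipped, i.e.\ is $\Lambda^\vee$. The tableau assignments of \cref{main:bijection} now read $\Tab(xy,F_\bullet)=\P$ and $\Tab(yx,E_\bullet)=\Q$. Hence the triple for $\tau^\vee$ is $(\Lambda^\vee,\P,\Q)$, as claimed. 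The main obstacle is primarily bookkeeping: verifying precisely that reversing both the $\pm$-grading and the $\alpha,\beta$ labels sends each indecomposable $U_k^\pm$ to $U_k^\mp$, so that the signed Young diagram $\Lambda$ is sent to its sign-swapped dual $\Lambda^\vee$ as defined at the start of \cref{duality}.
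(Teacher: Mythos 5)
Your proposal is correct and follows essentially the same route as the paper: identify $\overline{Z_C(\tau)}$ with $\overline{Z_{C^\vee}(\tau^\vee)}$ via the conormal duality of \cref{ppduality}, then recompute the triple at a generic point with the roles of $V_q$ and $V_p$ (hence the grading, the maps $\alpha,\beta$, and the two flags) interchanged, which flips every sign in $\Lambda$ and swaps $\Q$ with $\P$. Your extra remark justifying the passage from the fibrewise statement about $H(\tau)\subset H$ to the statement about $C(\tau)\subset C$ via $K$-transitivity on $X$ is a detail the paper leaves implicit, but it does not change the argument.
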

\begin{proof}
Let $C^\vee=H^\vee\times X$.
Consider a generic point $(x,y,F_\bullet,G_\bullet)$ in $Z_C(\tau)$.
Following \cref{projectiveDuality,ppduality}, we have $Z_C(\tau)=Z_{C^\vee}(\tau^\vee)$,
and hence $(x,y,F_\bullet,G_\bullet)$  is generic in $Z_{C^\vee}(\tau^\vee)$.
It follows that the triple is determined again by \cref{main:PPtoTriple},
with the roles of $V_q$ and $V_p$ switched,
i.e., with $\deg(V_q)=\minus$ and $\deg(V_p)=\plus$.
This yields the claimed correspondence $\tau^\vee\leftrightarrow(\Lambda^\vee,\P,\Q)$.
\end{proof}

\bibliography{biblio}
\bibliographystyle{amsalpha}
\end{document}